\documentclass[cmp]{svjour}  
\usepackage[centertags]{amsmath}
\usepackage{amsfonts}
\usepackage{amsmath}
\usepackage{amssymb}
\usepackage{tikz}
\journalname{Communications in Mathematical Physics}
\newcommand{\LL}{\mathcal{L}}

\newcommand{\RR}{{\mathbb R}}
\newcommand{\NN}{{\mathbb N}}
\newcommand{\ZZ}{{\mathbb Z}}
\newcommand{\CC}{{\mathbb C}}
\newcommand{\FF}{\mathcal{F}}
\newcommand{\KK}{\mathcal{K}}
\newcommand{\p}{\prime}
\newcommand{\ep}{\tilde{\varepsilon}}

\newcommand{\bigo}{\mathcal{O}}
\newcommand{\stooo}{IS^3\big(R(\log R)^{2k-1},\mathcal{Q}\big)}
\newcommand{\soooop}{IS^1\big(R(\log R)^{2k-1},\mathcal{Q}^\p\big)}
\newcommand{\sttoe}{IS^3\big(R^3(\log R)^{2k-1},\mathcal{Q}\big)}
\newcommand{\somoee}{IS^1\big(R^{-1}(\log R)^{2k},\mathcal{Q}\big)}
\newcommand{\soooep}{IS^1\big(R(\log R)^{2k-1},\mathcal{Q}^\p\big)}
\begin{document}
\textbf{Author:} Sohrab M. Shahshahani\\\\\\\\
\abstract{We construct a one parameter family of finite time blow ups to the co-rotational wave maps problem from $S^2\times \RR$ to $S^2,$ parameterized by $\nu\in(\frac{1}{2},1].$ The longitudinal function $u(t,\alpha)$ which is the main object of study will be obtained as a perturbation of a rescaled harmonic map of rotation index one from $\RR^2$ to $S^2.$ The domain of this harmonic map is identified with a neighborhood of the north pole in the domain $S^2$ via the exponential coordinates $(\alpha,\theta).$ In these coordinates $u(t,\alpha)=Q(\lambda(t)\alpha)+\mathcal{R}(t,\alpha),$ where $Q(r)=2\arctan{r},$ is the standard co-rotational harmonic map to the sphere, $\lambda(t)=t^{-1-\nu},$ and $\mathcal{R}(t,\alpha)$ is the error with local energy going to zero as $t\rightarrow 0.$ Blow up will occur at $(t,\alpha)=(0,0)$ due to energy concentration, and up to this point the solution will have regularity $H^{1+\nu-}.$

\def\thesection{\arabic{section}}
\section{Introduction}
\subsection{Set-up and History}
We study co-rotational wave maps $U:S^2\times\RR\rightarrow S^2\hookrightarrow\RR^3.$ Co-rotational means that the rotation index is one, or more precisely $U(t,\omega x)=\omega U(t,x)$ for $(t,x)\in\RR\times S^2,$ and $\omega\in SO(2),$ acting in the standard way on $S^2.$ The metric on the target sphere is induced from the ambient $\RR^3,$ and the domain is equipped with the standard metric of negative index $1,~m_{\beta\gamma}.$ With Einstein's summation convention in force, $\partial^\beta=m^{\beta\gamma}\partial_\gamma,$ $(m^{\beta\gamma})=(m_{\beta\gamma})^{-1}$, $d\sigma$ the associated volume element on the domain and $\langle\cdot,\cdot\rangle$ denoting the standard inner product on $\RR^3,$ wave maps are characterized by being critical with respect to the functional

\[U\mapsto \int_{S^2\times\RR}\langle\partial_\beta U,\partial^\beta U\rangle d\sigma,\quad\beta=0,1,2.\]

Recall that the energy

\[\mathcal{E}(U):=\int_{S^2}\langle DU(t,\cdot),DU(t,\cdot)\rangle d\mathrm{vol}_{S^2}\]

is conserved for such critical points. If we use spherical coordinates $\alpha,\theta$ on the domain sphere and let $u$ stand for the longitudinal angle on the target sphere, we can think of $U$ as $(t,\alpha,\theta)\mapsto(u(t,\alpha),\theta),$ where $u:(0,\pi)\rightarrow (0,\pi)$ is now a scalar map. $u$ then satisfies

\begin{equation}\label{wavemaps}
-u_{tt}+u_{\alpha\alpha}+\cot\alpha u_\alpha = \frac{\sin(2u)}{2\sin^2\alpha}.
\end{equation}

This equation will be the main object of study in this paper. The initial data will be given at time $t_0$ small enough, and we will solve the equation backwards in time. Note that the problem at hand is critical in the energy sense. Shatah and Tahvildar-Zadeh \cite{ST} proved the existence of stationary solutions to this problem under less restrictive spatial equivariance assumptions. Their results include well-posedness of the Cauchy problem in the small energy setting. Here we are interested in constructing blow ups through an energy concentration scenario. Our construction will closely follow that of Krieger, Schlag and Tataru \cite{KST} where blow ups are constructed for co-rotational wave maps from $\RR^2\times\RR$ to $S^2.$  The detailed discussion of the results in \cite{KST} is deferred to the next subsection. Here we briefly discuss some major recent developments, since the appearance of \cite{KST}. This is by no means an exhaustive account, although the reader can consult our references, in particular \cite{KST} and \cite{RR}, for a more comprehensive survey. Krieger et al. have used their method to construct blow ups for other wave type equations, in particular the NLW \cite{KST2} and Yang-Mills \cite{KST3} problems. In the setting of wave maps C\^{a}rstea, \cite{C} has extended the results of \cite{KST} to the case where the target is a more general surface of revolution. For the critical focusing wave equation, Donninger and Krieger \cite{DK} have recently modified these methods and constructed solutions which blow up at infinity. Going back to the wave map  problem, Rodnianski and Sterbenz \cite{RS} considered $k-$equivariant wave maps from $\RR^{2+1}$ to $S^2,$ for $k\geq4.$ Using different techniques from those of \cite{KST}, for each homotopy class $k,$ they construct finite time blow up solutions which remain smooth until the blow up time. Moreover, they demonstrate that their blow up construction is stable under small equivariant perturbations of the initial data. In \cite{RR} Raphael and Rodnianski study the problem for all rotation numbers $k\geq1$ (as well as the $SO(4)$ Yang-Mills), and are able to construct stable blow up solutions for all values of $k.$ In their construction, for $k\geq2$ the prescribed stable blow up rate is independent of the choice of initial data in the open set of data leading to blow up, but there is an additional degree of freedom depending on the initial data for $k=1.$ More precisely, if $T$ denotes the blow up time, for $k\geq2$ the blow up rate considered by Raphael and Rodnianski is
\[\lambda(t)=c_k(1+o(1))\frac{T-t}{|\log(T-t)|^{\frac{1}{2k-2}}},\]
where as for $k\geq1$ they have
\[\lambda(t)=(T-t)e^{-\sqrt{|\log(T-t)|}+O(1)}.\]
Related to this work, but in the context of Schrodinger equations, are the papers \cite{MR1}, \cite{MR2}, \cite{MR3}, \cite{MR4}, \cite{MR5}, \cite{R}, \cite{MRR1}, \cite{MRR2} by Merle, Raphael, and Rodnianski, and a similar result by Perelman \cite{P}. The construction of blow up wave maps by rescalings of harmonic maps are complemented by the result of Cote, Kenig, and Merle \cite{CKM}, where scattering is shown in the energy norm for the co-rotational case with initial data of energy smaller than the lowest energy harmonic map. See also \cite{Co} where the question of stability of harmonic map profiles is studied. We note that in the case where the dimension of the domain is larger than two, the study of singularities goes back to \cite{CST} and \cite{Sha}. As the focus here is the formation of singularities, we do not discuss the developments in the well-posedness and scattering directions in detail, but we refer the reader to the important papers \cite{KR}-\cite{KS},\cite{StT1}-\cite{StT2}, and \cite{T1}-\cite{Tat4}, and the references therein.\\\\
\subsection{General Outline}
After this short historical note we proceed to a general outline of the argument. As mentioned above, this outline will be very similar to that in \cite{KST}. It was shown by Struwe in \cite{S} that if a co-rotational wave map from $\RR^{2+1}$ to $S^2$ blows up, then a rescaled version of this map converges to a time-independent harmonic map from $\RR^2$ to $S^2$ at the blow up point. Motivated by this, Krieger et al. imposed the ansatz $u=Q+\mathcal{R}$ where $Q$ is a certain rescaling (to be specified below) of the harmonic map $Q(r)=2\arctan(r),$ and $\mathcal{R}$ is the remainder $u-Q.$ They then find $\mathcal{R}$ in such a way that $u$ becomes a wave map and the energy of $\mathcal{R}$ decays as $t\rightarrow 0,$ leading to energy concentration at the origin. The idea here will be similar. The difference is that $Q$ will no longer be a harmonic map from the whole space $S^2$ to the target $S^2,$ but rather from $\RR^2$ identified with a neighborhood of the north pole via spherical coordinates $(\alpha,\theta).$ One can think of this $\RR^2$ as the tangent space at the north pole, mapped to the sphere via the exponential map. More precisely we let

\[u(t,\alpha)=Q(\lambda(t)\alpha)+\mathcal{R}(\alpha,t),\quad\lambda(t)=t^{-1-\nu}.\]

Here $\nu\in(\frac{1}{2},1]$ is a fixed parameter. The reason for this choice of $\lambda$ is that according to \cite{S} if we have blow up in the flat case, then there is a decomposition of $u$ as above (with $r$ denoting the radial coordinate instead of $\alpha$) and a sequence of times $t_i\rightarrow0$ with $\lambda(t_i)|t_i|\rightarrow \infty$ such that the rescaled functions $u(t_i,\frac{r}{\lambda(t_i)})$ converge to $Q(r)$ in the strong energy topology. Note that even though this bubbling result by Struwe - which to the best of my knowledge is not available in the case of a curved background - served as a motivation in \cite{KST}, its validity was not used explicitly in the construction.\\

The idea now is to rewrite (\ref{wavemaps}) as an equation for $\mathcal{R}$ and use a fixed point argument relative to an appropriate norm to find $\mathcal{R}.$ Because we want the energy of $\mathcal{R}$ to decay as time goes to zero, our iteration norms will involve growth in time. However, the error coming from the failure of the rescaled harmonic map $Q(\lambda(t)\alpha)$ to be an exact wave map, is too large to be placed in these iteration spaces (at least the ones used in \cite{KST} and this paper). For this reason we have to further breakdown the error $\mathcal{R}$ as $Q(\lambda(t)\alpha)+\mathcal{R}(t\alpha)=u_{2k-1}(t,\alpha)+\ep(t,\alpha),$ where $u_{2k-1}$ is an approximate solution to the wave maps equation, and $\ep$ is the error playing the role that $\mathcal{R}$ played before. The index $2k-1$ indicates that the approximate solution will be refined iteratively, starting with $u_0(t,\alpha)=Q(\lambda(t)\alpha).$ The point is that if $k$ is sufficiently large, the error due to $u_{2k-1}$ not being an exact wave map is small enough to be placed in our iteration spaces. Of course we will need to make sure that the energy of $u_{2k-1}-Q$ decays as we approach the blow up time. Before we state the main theorem we remind the reader that in the equivariant setting the energy is given,up to a constant, by

\[\mathcal{E}(u)=\int_0^\pi\Big[\frac{1}{2}(u_t^2+u_\alpha^2)+\frac{\sin^2(u)}{2\sin^2\alpha}\Big]\sin\alpha d\alpha,\]

and the local energy relative to the origin is defined as

\[\mathcal{E}_{\mathrm{loc}}(u)=\int_0^t\Big[\frac{1}{2}(u_t^2+u_\alpha^2)+\frac{\sin^2(u)}{2\sin^2\alpha}\Big]\sin\alpha d\alpha.\]

\begin{theorem}\label{BlowUp}
Let $\nu\in(\frac{1}{2},1]$ be arbitrary and $t_0>0$ sufficiently small. Define $\lambda(t)=t^{-1-\nu}$ and fix a large integer $N.$ Then there is a function $u^e$ satisfying \footnote{Following \cite{KST} we call $u^e$ the elliptic modifier. Also for noninteger $\beta,$ $C^\beta$ means $C^{[\beta],\beta-[\beta]}.$}

\begin{align*}
&u^e\in C^{\nu+\frac{1}{2}-}(\{0<t< t_0,~r\leq t\}),\\
&\mathcal{E}_{\mathrm{loc}}(u^e)(t)\lesssim (t(\lambda(t)))^{-2}|\log t|^2,\quad \mathrm{as}~t\rightarrow 0,
\end{align*}

and a blow up solution $u$ to (\ref{wavemaps}) in $(0,t_0]$ which is of the form

\[u(t,\alpha)=Q(\lambda(t)\alpha)+u^e(t,\alpha)+\ep(t,\alpha),\quad 0\leq \alpha\leq t,\]

where $\ep$ decays as $t\rightarrow 0;$ more precisely,

\[\ep\in t^NH^{1+\nu-}(S^2),\quad \ep_t\in t^{N-1}H^{\nu-}(S^2),\quad \mathcal{E}_{\mathrm{loc}}(\ep)(t)\lesssim t^N,\quad \mathrm{as}~t\rightarrow0,\]

with spatial norms which are uniformly controlled as $t\rightarrow0.$ The solution $u(t,\alpha)$ extends as an $H^{1+\nu-}$ solution to all of $(0,t_0]\times S^2,$ and its energy concentrates in the cuspidal region $0\leq r\lesssim \frac{1}{\lambda(t)}$ leading to blow up at $r=t=0.$

\end{theorem}
\subsection{Overview of the Proof}
In this subsection we provide a more detailed account of the blow up construction and the proof of theorem \ref{BlowUp}. The proof here will combine the method of proof of the corresponding theorem in \cite{KST} with a perturbative argument. By this we mean that equation (\ref{wavemaps}) is replaced by its flat analogue

\[-u_{tt}+u_{\alpha\alpha}+\frac{1}{\alpha}u_\alpha-\frac{\sin(2u)}{2\alpha^2},\]

and the difference is included in the source term. In the appendix, we have collected the results from \cite{KST} which can be used directly in our setting.\\\\

Section 2 is devoted to constructing the approximate solution which we refer to as the elliptic modifier. The construction consists of adding successive refinements to the previous approximate solutions, starting with $u_0(t,\alpha)=Q(t,\alpha).$ This is done in two steps: first near $\alpha=0$ and then near the boundary of the cone $\alpha=t.$ The error at step $2k-1$

\[e_{2k-1}:=(-\partial_t^2+\partial_\alpha^2+\cot\alpha\partial_\alpha)u_{2k-1}-\frac{\sin(2u_{2k-1})}{2\sin^2\alpha},\]

will satisfy

\[e_{2k-1}=\bigo\Big(\frac{R(\log(2+R))^{2k-1}}{t^{-2k+2}t^2(t\lambda)^2}\Big).\]

Here $R=\lambda\alpha$ is the rescaled radial variable, and the $\bigo(\cdot)$ terms are uniform in $r\leq t,~0<t<t_0.$ Note that for $\nu>1$ the decay in time is slower than the decay of the corresponding error $\tilde{e}_{2k-1}$ in the flat case from \cite{KST} which satisfies

\[\tilde{e}_{2k-1}=\bigo\Big(\frac{R(\log(2+R))^{2k-1}}{t^2(t\lambda)^{2k}}\Big).\]

Even though we restrict to the case $\nu\leq1,$ the renormalization step is carried out for all values of $\nu>\frac{1}{2}.$ This slower decay is the result of the error coming from replacing the curved wave equation by its flat counterpart, and is observed in the analysis near the boundary of the cone. It entails that if we could take $\nu>1,$ to be able to close the final fixed point argument we would need more iterations of our approximation scheme (see section 5). In our analysis we will use the variable $R$ near $\alpha=0$ and the self-similar variable $a=\frac{\alpha}{t}$ near the boundary of the cone $a=1.$ The approximate solution $u_{2k-1}$ and the error $e_{2k-1}$ will involve expressions of the form $(1-a)^{\nu-\frac{1}{2}}\log^m(1-a),$ which dictate the upper bound on the regularity of our final solution. In both cases we will encounter a Sturm-Liouville ODE which is why we use the name "elliptic modifier." Even though the arguments in this section are similar to those in \cite{KST} we are not  able to use most of the results from that paper directly. In particular to be able to track the error from our perturbative argument we will need to introduce extra auxiliary variables in our function spaces (see section 2 for precise definitions), and we will use a slightly watered down version of the function algebras $\mathcal{Q}$ and $\mathcal{Q}^\p$ (in particular we do not track the exact number of $\log(1-a)$ factors at each iteration, and hence we dispense with the index on the algebras $\mathcal{Q}_k$ and $\mathcal{Q}^\p_k$).\\\\

Section 3 corresponds to sections 4 and 7 in \cite{KST} and does not contain new results. We make the ansatz $u(t,\alpha)=u_{2k-1}(t,\alpha)+\varepsilon(t,\alpha)$ and recast equation (\ref{wavemaps}) as an equation for $\epsilon$ as

\begin{align*}
-\varepsilon_{tt}+\varepsilon_{\alpha\alpha}+\frac{1}{\alpha}\varepsilon_\alpha-\frac{\cos(2Q(\lambda\alpha))}{\alpha^2}\varepsilon=N_{2k-1}(\varepsilon)+H_1(\varepsilon)+e_{2k-1}.
\end{align*}

Here $N_{2k-1}$ contains the nonlinearity coming from the original equation on the curved background and is defined in (\ref{Nodd}), and $H_1,$ which is responsible for the perturbative error coming from replacing the curved equation with the flat one, is defined in (\ref{Hodd}). We will change variables to $R=\lambda\alpha$ and $\tau=\frac{-1}{\nu}t^{-\nu}$ (so now the blow up time corresponds to $\tau=\infty$) and rewrite the equation above in terms of the new variable $\ep(\tau,R)=R^{\frac{1}{2}}\varepsilon(t(\tau),\alpha(\tau,R))$ as

\begin{align*}
\Big(-\Big(\partial_\tau+\frac{\lambda_\tau}{\lambda}R\partial_R\Big)^2&+\frac{1}{4}(\frac{\lambda_\tau}{\lambda})^2+\frac{1}{2}\partial_\tau(\frac{\lambda_\tau}{\lambda})\Big)\ep-\LL\ep\\
&=\lambda^{-2}R^{\frac{1}{2}}(N_{2k-1}(R^{-\frac{1}{2}}\ep)+H_1(R^{-\frac{1}{2}}\ep)+e_{2k-1}),
\end{align*}

where

\[\LL:=-\partial_R^2+\frac{3}{4R^2}-\frac{8}{(1+R^2)^2}.\]

$\LL$ is a self adjoint operator on $L^2((0,\infty),dR)$ (see the appendix for the definition of the domain of $\LL$) and its spectral properties, which are analyzed in \cite{G} and \cite{KST} are summarized in the appendix. In particular with $\rho(\xi)d\xi$ denoting the spectral measure associated with $\LL$ we consider the distorted Fourier transform

\[\FF:f\mapsto \hat{f}(\xi)=\lim_{b\rightarrow\infty}\int_0^b\phi(R,\xi)f(R)dR,\]

with inverse

\[\FF^{-1}:\hat{f}\mapsto f(r)=\lim_{\mu\rightarrow\infty}\int_0^\mu\phi(R,\xi)\hat{f}(\xi)\rho(\xi)d\xi.\]

We apply $\mathcal{F}$ to the equation satisfied by $\ep$ and let $x=\mathcal{F}\ep$ to get

\begin{align}
-\Big(\partial_\tau-2\frac{\lambda_\tau}{\lambda}\partial_\xi\Big)^2&x-\xi x\nonumber\\
                                                                    &=2\frac{\lambda_\tau}{\lambda}\KK\Big(\partial_\tau-2\frac{\lambda_\tau}{\lambda}\xi\partial_\xi\Big)x+\frac{\lambda_\tau^2}{\lambda^2}\Big(\KK^2-\frac{\nu}{1+\nu}\KK+2[\KK,\xi\partial_\xi]\Big)x\nonumber\\
                                                                    &~~-\Big(\frac{1}{4}\Big(\frac{\lambda_\tau}{\lambda}\Big)^2+\frac{1}{2}\partial_\tau\Big(\frac{\lambda_\tau}{\lambda}\Big)\Big)x\nonumber\\
                                                                    &~~+\lambda^{-2}\FF R^{\frac{1}{2}}\Big(N_{2k-1}(R^{-\frac{1}{2}}\FF^{-1}x)+H_1((R^{-\frac{1}{2}}\FF^{-1}x))+e_{2k-1}\Big).\nonumber
\end{align}

Here the operator $\KK$ is defined as

\[\widehat{R\partial_R u}=-2\xi\partial_\xi \hat{u}+ \KK\hat{u},\]

and as such encodes the deviation of $R\partial_R$ from being diagonalizable in the Fourier basis. We denote by $H$ the fundamental solution operator of the operator on the left hand side of the equation for $x$. The mapping properties of $\KK$ and $H$ are summarized in the appendix.\\\\

In section 4 we introduce the Sobolev norms associated with the operator $\LL.$ The final fixed point argument is carried out on the spaces defined by these norms. On the frequency side we define the weighted $L^2$ norms

\begin{align*}
\|f\|_{L^{2,s}_\rho}:=\Big(\int_0^\infty|f(\xi)|^2\langle\xi\rangle^{2s}\rho(\xi)d\xi\Big)^{\frac{1}{2}}.
\end{align*}

For functions of the spacial variable $R$ we define

\begin{align*}
\|u\|_{H^s_\rho}:=\|\hat{u}\|_{L^{2,s}_\rho}.
\end{align*}

To control the decay in time we introduce the $L^{N,\infty}L^{2,s}_\rho$ spaces with norm

\begin{align*}
\|f\|_{L^{\infty,N}L^{2,s}_\rho}:=\sup_{\tau\geq1}\tau^N\|f\|_{L^{2,s}_\rho}.
\end{align*}

$L^{N,\infty}H^s_\rho$ is defined similarly. The norm of the $N_{2k-1}$ term arising in the equation satisfied by the Fourier coefficient $x$ is controlled in \cite{KST}, and here is recorded in the appendix. The new ingredient in the nonlinearity is $H_1,$ and it is this nonlinearity which is responsible for the upper bound $\nu\leq1.$ In particular, in the renormalization step (Section 2) we carry out our computations for general $\nu>\frac{1}{2}.$\\\\

Finally in section 5 we assemble the results from the previous sections to find the Fourier coefficient $x=\mathcal{F}\ep$ using a contraction mapping argument. We will use certain $L^{\infty,N}L^{2,s}_\rho$ norms for this fixed point argument, and we need to take $k$ large enough depending on $N$ to be able to place the term involving $e_{2k-1}$ in our iteration space. We will then retrace our steps back to find $\ep$ and verify that it satisfies the appropriate energy decay as $t\rightarrow0.$

\subsection{Acknowledgments.}
I would like to thank my advisor Professor Joachim Krieger for suggesting this problem and for his guidance and encouragement throughout the work. I would also like to thank the post-docs in our group Roland Donninger, Joules Nahas, and Willie Wong for helpful discussions. 
\section{The Elliptic Profile Modifier}
%
%
We will construct approximate solutions to the wave maps equation in the light cone $\mathcal{C}_0$ at the origin

\begin{align*}
\mathcal{C}_0:=\{(t,\alpha)~:~0\leq \alpha < t,~0<t<t_0 \}.
\end{align*}

The approximations will be perturbations of a time-dependent harmonic map profile

\[u_0(t,\alpha)=Q(\lambda(t)\alpha),\quad \lambda(t)=t^{-1-\nu}.\]

We will closely follow the construction in \cite{KST}, while making the necessary modifications necessary to handle the curved metric on the background. To begin, we introduce the following set of auxiliary variables:


\begin{align*}
&b_1=\frac{(\log(2+R^2))^2}{(t\lambda)^2}=t^{2\nu}(\log(2+R^2))^2,\\
&b_2=\frac{(\log(2+R^2))^2}{(t^\nu\lambda)^2}=t^{2}(\log(2+R^2))^2,\\
&b_3=\alpha^2,\\
&b_4=(t\lambda)^{-2}=t^{2\nu}\\
&b_5=(t^\nu\lambda)^{-2}=t^2\\
&R=\lambda\alpha,\\
&a=\frac{\alpha}{t}.
\end{align*}


Technically speaking, we do not need $b_3$ because $b_3=a^2b_5.$ However, as even analytic functions of $\alpha$ arise naturally in our computations, we prefer to keep this as a separate variable. Since we are working in the light cone $\mathcal{C}_0$, the variables $b_i$ will be restricted to small intervals $[0,b_{0,i}],$ respectively, and $a$ will take values in $[0,1].$ Let

\begin{align*}
\Omega:=[0,1]\times[0,\infty)\times[0,b_{0,1}]\times\cdots\times[0,b_{0,5}].
\end{align*}

and denote the projection of $\Omega$ onto the last six factors by $\Omega_a,$ and the projection onto the last five factors by $\Omega_{a,R}.$ We will work with asymptotic expansions of the functions we are concerned with in order to study their growth and decay. To be able to make optimal use of the new set of variables we need a few definitions. These definitions, and the statement of theorem \ref{elliptic modifier} are all that will be used from this section in the remainder of the paper. The reader may therefore wish to skip the proof of the theorem in the first reading. The definitions below are technical. For the sake of readability the reader may think of $IS^m\big(R^k(\log R)^l,\mathcal{Q}\big)$ as the space of functions satisfying the following properties. They are analytic near $R=0$ and vanish to order $m$ there. For large $R$ they behave roughly like $R^k(\log R)^m.$ $\mathcal{Q}$ means that near the boundary of the cone $a=1,$ the functions may contain singularities of the form $(1-a)^{\nu+\frac{1}{2}}(\log(1-a))^n$ for some integer $n.$ If $\mathcal{Q}$ is replaced by $\mathcal{Q}^\p,$ the exponent $\nu+\frac{1}{2}$ should be replaced by $\nu-\frac{1}{2}.$ The auxiliary variables $b_i$ are harmless and may be safely ignored. We now provide the precise definitions.\\\\

\begin{definition}\label{Qdef}
$\mathcal{Q}$ is the algebra of continuous functions $q:[0,1]\rightarrow \RR$ with the following properties:
\begin{description}
\item[]  $~~$ (i) q is analytic in $[0,1)$ with an even expansion at $0.$
\item[]  $~~$ (ii) Near $a=1$ we have an absolutely convergent expansion of the form
    \begin{align*}
    q=q_0(a) +\sum_{i=1}^{i=\infty}\Big(&(1-a)^{(2i-1)\nu+\frac{1}{2}}\sum_{j=0}^{\infty}q_{2i-1,j}(a)(\log(1-a))^j\\
                                   &(1-a)^{2i\nu+1}\sum_{j=0}^{\infty}q_{2i,j}(a)(\log(1-a))^j\Big)
    \end{align*}
    with analytic coefficients $q_0,~q_{i,j},$ such that for each $i$ only finitely many of the $q_{i,j}$ are not identically equal to zero.
\end{description}
\end{definition}

\begin{definition}
$\mathcal{Q}^\p$ is the space of continuous functions $q:[0,1]\rightarrow \RR$ with the following properties:
\begin{description}
\item[]  $~~$ (i) q is analytic in $[0,1)$ with an even expansion at $0.$
\item[]  $~~$ (ii) Near $a=1$ we have an absolutely convergent expansion of the form
    \begin{align*}
    q=q_0(a) +\sum_{i=1}^{i=\infty}\Big(&(1-a)^{(2i-1)\nu-\frac{1}{2}}\sum_{j=0}^{\infty}q_{2i-1,j}(a)(\log(1-a))^j\\
                                   &(1-a)^{2i\nu}+\sum_{j=0}^{\infty}q_{2i,j}(a)(\log(1-a))^j\Big)
    \end{align*}
    with analytic coefficients $q_0,~q_{i,j},$ such that for each $i$ only finitely many of the $q_{i,j}$ are not identically equal to zero.
\end{description}
\end{definition}

\begin{definition}
$S^m(R^k(\log R)^l)$ is the class of analytic functions $v:[0,\infty)\rightarrow \RR$ with the following properties:
\begin{description}
\item[]$~~$ (i) $v$ vanishes of order $m$ at $R=0$ and $v(R)=R^m\sum_{j=0}^{j=\infty}c_jR^{2j}$ for small $R.$
\item[]$~~$ (ii) $v$ has a convergent expansion near $R=\infty,$
    \begin{align*}
    v=\sum_{0\leq j\leq l+i}c_{ij}R^{k-2i}(\log R)^j
    \end{align*}
\end{description}
\end{definition}

Finally,

%
\begin{definition}
a) $S^m(R^k(\log R)^l,\mathcal{Q})$ is the class of analytic functions $v:\Omega\rightarrow \RR$ so that
\begin{description}
\item[]$~~$(i) $v$ is analytic as a function of $R,b_i$
                \[v:\Omega_a\rightarrow\mathcal{Q}\]
\item[]$~~$(ii) $v$ vanishes to order $m$ at $R=0$ and is of the form
                \[v\approx R^m\sum_{j=0}^{j=\infty}c_j(a,b_1,b_2,b_3,b_4,b_5)R^{2j}\]
            around $R=0.$
\item[]$~~$(iii) $v$ has a convergent expansion at $R=\infty,$
            \begin{align*}
            v(R,\cdot,b_1,b_2,b_3,b_4,b_5)=\sum_{0\leq j\leq l+1}c_{ij}(\cdot,b_1,b_2,b_3,b_4,b_5)R^{k-2i}(\log R)^j
            \end{align*}
         where the coefficients $c_{ij}:\Omega_{a,R}\rightarrow \mathcal{Q}$ are analytic with respect to $b_i.$
\end{description}

b) $IS^m(R^k(\log R)^l, \mathcal{Q})$ is the class of analytic functions $w$ on the cone $\mathcal{C}_0$ which can be represented as
    \[w(t,\alpha)=v(R,a,b_1,b_2,b_3,b_4,b_5),\quad v\in S^m(R^k(\log R)^l,\mathcal{Q}).\]
\end{definition}

Note that this last representation is in general not unique. Note also that these definitions are slightly different from the corresponding ones in \cite{KST}.\\\\
Our goal in this section is to prove the following theorem.

\begin{theorem}\label{elliptic modifier}
For any $k\in\NN,$ there exists an approximate solution

\[u_{2k-1}\in Q(R)+\frac{1}{(t\lambda)^2}IS^3\big(R\log R,\,\mathcal{Q}\big)\]

to the wave maps equation (\ref{wavemaps}), so that the corresponding error $e_{2k-1}$ satisfies

\[t^2e_{2k-1}\in\sum_{j=0}^{k-1}\frac{1}{(t^\nu\lambda)^{2j}(t\lambda)^{2(k-j)}}\soooop .\]

\end{theorem}

It is worth mentioning that for $\nu>1,$ the decay in time of the error here ($t^{2k+2\nu-4}$) is slower than that of the corresponding error in \cite{KST} ($t^{2k\nu-2}$). 

\begin{proof}
Our strategy will be the same as that in \cite{KST}. We start with $u_0$ as a first approximation and improve our approximations inductively by adding a the correction $v_k$ at the $k$'th step:

\[u_k=u_{k-1}+v_k.\]

The error at this step is

\[-e_k=\Big(-\partial_t^2+\partial^2\alpha+\cot\alpha\partial_\alpha\Big)u_k-\frac{\sin(2u_k)}{2\sin^2\alpha}.\]

If $u$ were an exact solution, then the difference $\varepsilon=u-u_{k-1}$ would satisfy

\begin{align}
\Box_g \varepsilon&-\frac{\cos(2u_{k-1})}{2\sin^2\alpha}\sin(2\varepsilon)\nonumber\\
                &+ \frac{\sin(2u_{k-1})}{2\sin^2\alpha}(1-\cos(2\varepsilon))=e_{k-1}, \label{errorequation}
\end{align}

where $\Box_g=-\partial_t^2+\partial^2\alpha+\cot\alpha\partial_\alpha.$ As in \cite{KST} we linearize this equation around $\varepsilon=0,$ and substitute $u_0$ for $u_{k-1}$ to get

\begin{equation}\label{errorlinequation}
\Big(\Box_g-\frac{\cos(2u_0)}{2\sin^2\alpha}\Big)\varepsilon\approx e_{k-1}.
\end{equation}

For $r\ll t$ the time derivative should play a less important role and we approximate the equation above by

\[\Big(\partial_\alpha^2+\cot\alpha\partial_\alpha-\frac{\cos(2u_0)}{\sin^2\alpha}\Big)\varepsilon=e_{k-1},\]

In fact we make one further simplification and replace $\sin\alpha$ by $\alpha$ and $\cos\alpha$ by $1:$

\[\Big(\partial_\alpha^2+\frac{1}{\alpha}\partial_\alpha-\frac{\cos(2u_0)}{2\alpha^2}\Big)\varepsilon=e_{k-1}.\]

It is understood that the error $e_{k-1}$ includes the one previously defined and the new error resulting from this last simplification (this will be written out completely below). This simplification allows us to use the results from \cite{KST} more directly. Similarly for $r\approx t,$ where we will later use the self similar variable $a,$ we approximate $\cos(2u_0)$ by $1$ and set $\sin\alpha\approx\alpha,~\cos\alpha\approx 1,$ and rewrite (\ref{errorlinequation}) as

\[\Big(-\partial_t^2+\partial_\alpha^2+\frac{1}{\alpha}\partial_\alpha-\frac{1}{\alpha^2}\Big)\varepsilon=e_{k-1}.\]

The heuristic arguments above lead us to the following two step improvement of our approximations. First we consider the region $r\ll t$ and define $v_{2k+1}$ as a solution of

\begin{equation}\label{voddeq}
\Big(\partial_\alpha^2+\frac{1}{\alpha}\partial_\alpha-\frac{\cos(2u_0)}{\alpha^2}\Big)v_{2k+1}=e_{2k}^0,
\end{equation}

with zero initial data at $\alpha=0$ (to be interpreted suitably below, as the coefficients are singular at $r=0$). Here $e_{2k}^0$ is the "large" part of the error $e_{2k}$ and will be defined during the iteration. In the second step we improve our approximation by adding $v_{2k+2}$ defined by

\begin{equation}\label{veveneq}
 \Big(-\partial_t^2+\partial_\alpha^2+\frac{1}{\alpha}\partial_\alpha-\frac{1}{\alpha^2}\Big)v_{2k+2}=e^0_{2k+1}.
\end{equation}

Again the zero initial data at $r=0$ needs proper interpretation, and $e_{2k+1}^0$ is the principal part of the error $e_{2k+1},$ to be defined below. Comparing with (\ref{errorequation}) we compute the successive errors as

\begin{align*}
&e_{2k}=e^1_{2k-1}+N_{2k}(v_{2k})+H_0(v_{2k}),\\
&e_{2k+1}=e^1_{2k}-\partial^2_tv_{2k+1}+N_{2k+1}(v_{2k+1})+H_1(v_{2k+1}).
\end{align*}

Here

\begin{align}
N_{2k+1}(v)=&\frac{\cos(2u_{2k})-\cos(2u_0)}{\sin^2\alpha}v+\frac{\sin(2u_{2k})}{2\sin^2\alpha}(\cos(2v)-1)\nonumber\\
            &+ \frac{\cos(2u_{2k})}{2\sin^2\alpha}(\sin(2v)-2v), \label{Nodd}
\end{align}

respectively

\begin{align}
N_{2k}(v)=&\frac{\cos(2u_{2k-1})-1}{\sin^2\alpha}v+\frac{\sin(2u_{2k-1})}{2\sin^2\alpha}(\cos(2v)-1)\nonumber\\
          & + \frac{\cos(2u_{2k-1})}{2\sin^2\alpha}(\sin(2v)-2v), \label{Neven}
\end{align}

and

\begin{equation}\label{Heven}
H_0(v)=\Big(\frac{1}{\alpha}-\cot\alpha\Big)\partial_\alpha v+\Big(\frac{1}{\alpha^2}-\frac{1}{\sin^2\alpha}\Big)v,
\end{equation}

respectively

\begin{equation}\label{Hodd}
H_1(v)=\Big(\frac{1}{\alpha}-\cot\alpha\Big)\partial_\alpha v+\Big(\frac{\cos(2u_0)}{\alpha^2}-\frac{\cos(2u_0)}{\sin^2\alpha}\Big)v.
\end{equation}

To carry out the construction just outlined and prove the theorem, we implement the following four step induction scheme. For each $k\geq1,$
\begin{align}
&v_{2k-1}\in\sum_{j=0}^{k-1}\frac{1}{(t^\nu\lambda)^{2j}(t\lambda)^{2(k-j)}}\stooo \label{vodd}\\
&t^2e_{2k-1}\in\sum_{j=0}^{k-1}\frac{1}{(t^\nu\lambda)^{2j}(t\lambda)^{2(k-j)}}\soooop \label{eodd}\\
&v_{2k}\in\sum_{j=0}^{k-1}\frac{1}{(t^\nu\lambda)^{2j}(t\lambda)^{2(k+1-j)}}\sttoe \label{veven}\\
&t^2e_{2k}\in\sum_{j=0}^{k-1}\frac{1}{(t^\nu\lambda)^{2j}(t\lambda)^{2(k-j)}}[\somoee \nonumber\\
&\quad\quad\quad\quad\quad\quad+\sum_{i=1}^5b_i\soooep]\label{eeven}.
\end{align}

Step $0:~t^2e_0 \in IS^1(R^{-1}).$
\begin{align*}
e_0&=\partial_t^2u_0-\partial_\alpha^2u_0-\frac{1}{\alpha}\partial_\alpha u_0+\frac{\sin (2u_0)}{2\alpha^2}\\
   &+(\frac{1}{\alpha}-\cot\alpha)\partial_\alpha u_0+(\frac{1}{2\sin^2\alpha}-\frac{1}{2\alpha^2})\sin(2u_0)\\
   &=\partial_t^2(Q(\lambda\alpha))+(\frac{1}{\alpha}-\cot\alpha)\partial_\alpha (Q(\lambda\alpha))+(\frac{1}{2\sin^2\alpha}-\frac{1}{2\alpha^2})\sin(2Q(\lambda\alpha)).
\end{align*}

So,

\begin{align*}
t^2e_0&=\Bigg[(1+\nu)^2\frac{4R}{(1+R^2)^2}-\nu(1+\nu)\frac{2R}{1+R^2}\Bigg]\\
      &+t^2\Bigg[\bigg(\frac{2(\sin^2\alpha-\alpha^2)}{\alpha^2\sin^2\alpha}\bigg)\bigg(\frac{R(1-R^2)}{(1+R^2)^2}\bigg)+\bigg(\frac{\sin\alpha-\alpha\cos\alpha}{\alpha^2\sin\alpha}\bigg)\frac{R}{1+R^2}\Bigg]
\end{align*}

Note that the functions of $\alpha$ in the second term are analytic and even, and that $t^2=b_5$. It follows that

\begin{align*}
t^2e_0\in IS^1(R^{-1}),
\end{align*}

as desired.\\

Step $1:$ define $v_{2k-1}$ such that (\ref{vodd}) holds.\\

Define the principal part $e^0_{2k-2}$ of $e_{2k-2}$ by setting $b_i=0,~i=1\cdots 5,$ in (\ref{eeven}) with $k$ replaced by $k-1.$ We can write $e_{2k-2}^0=\sum_{j=0}^{k-2}e_{2k-2,j}^0,$ with

\begin{align*}
t^2e_{2k-2,j}^0\in\frac{1}{(t^\nu\lambda)^{2j}(t\lambda)^{2(k-1-j)}}IS^1(R^{-1}(\log R)^{2k-2},Q).
\end{align*}

The non principal part $t^2e^1_{2k-2}=t^2(e_{2k-2}-e^0_{2k-2})$ belongs to

 \begin{align*}
 &\sum_{j=0}^{k-2}\sum_{i=1}^5\frac{b_i\big[IS^1(R^{-1}(\log R)^{2k-2},\mathcal{Q})+IS^1(R(\log R)^{2k-3},\mathcal{Q}^\p)\big]}{(t^\nu\lambda)^{2j}(t\lambda)^{2(k-1-j)}}\\
 &\subseteq \sum_{j=0}^{k-1} \frac{1}{(t^\nu\lambda)^{2j}(t\lambda)^{2(k-j)}}IS^1(R(\log R)^{2k-1},\mathcal{Q}^\p),
 \end{align*}

 which can be included in $e_{2k-1}.$ The inclusion above is clear for $i\neq3,$ and follows by writing $b_3=a^2/(t^\nu\lambda)^2$ for $i=3.$\\

 Now as in (\ref{voddeq}), the idea is to define $v_{2k-1,j}$ by requiring that

\[(\partial_\alpha^2+\frac{1}{\alpha}\partial_\alpha-\frac{\cos(2u_0)}{\alpha^2})v_{2k-1,j}=e^0_{2k-2,j}.\]

Note that if $v(R)$ is a function of only $R$ then $(\partial_\alpha^2+\frac{1}{\alpha}\partial_\alpha-\frac{\cos(2u_0)}{\alpha^2})v=\lambda^2Lv,$ where

\begin{equation}\label{Lelliptic}
L=\partial_R^2+\frac{1}{R}\partial_R-\frac{\cos(2u_0)}{R^2}=\partial_R^2+\frac{1}{R}\partial_R-\frac{1-6R^2+R^4}{R^2(1+R^2)^2}.
\end{equation}

Using this as a motivation we define $v_{2k-1,j}$ by requiring that

\[(t\lambda)^2Lv_{2k-1,j}=t^2e^0_{2k-2,j}.\]

In doing so, we view the equation above as an ODE in $R$ and treat all other variables as parameters. The errors resulting from this simplification will be studied in the next step. According to lemma \ref{lemma3.7} from the appendix $v_{2k-1,j}\in \frac{1}{(t^\nu\lambda)^{2j}(t\lambda)^{2(k-j)}}IS^3(R(\log R)^{2k-1},\mathcal{Q}).$ Therefore if we define

\[v_{2k-1}:=\sum_{j=0}^{k-2} v_{2k-1,j},\]

we have $v_{2k-1}\in\sum_{j=0}^{k-1} \frac{1}{(t^\nu\lambda)^{2j}(t\lambda)^{2(k-j)}}IS^3(R(\log R)^{2k-1},\mathcal{Q}).$\footnote{In fact we have $v_{2k-1}\in\sum_{j=0}^{k-2} \frac{1}{(t^\nu\lambda)^{2j}(t\lambda)^{2(k-j)}}IS^3(R(\log R)^{2k-1},\mathcal{Q}),$ but we do not use this.}\\

Step $2:$ show that with $v_{2k-1}$ defined as in the previous step, (\ref{eodd}) holds.\\

Thinking of $v_{2k-1}$ as a function of $t,~R,$ and $a$ we write

\[e_{2k-1}=e^1_{2k-2}+N_{2k-1}(v_{2k-1})+H_1(v_{2k-1})+E^tv_{2k-1}+E^av_{2k-1}.\]

$e^1_{2k-2}$ was taken care of in the previous step. $N_{2k-1}(v_{2k-1})$ and $H_1(v_{2k-1})$ account for the contributions from the nonlinearity and approximating $\sin\alpha$ by $\alpha.$ They are given by (\ref{Nodd}) and (\ref{Hodd}) respectively. $E^tv_{2k-1}$ contains the terms in $\partial_t^2v_{2k-1}(t,R,a)$ where no derivatives apply to $a$ and $E^av_{2k-1}$ the terms in

\[\Big(-\partial_t^2+\partial_r^2+\frac{1}{r}\partial_r\Big)v_{2k-1}(t,R,a)\]

where at least one derivative applies to $a.$ The only new contribution compared with \cite{KST} is $H_1(v_{2k-1}),$ which we now bound.

\begin{align*}
\Big(\frac{\cos(2u_0)}{\alpha^2}-\frac{\cos(2u_0)}{\sin^2\alpha}\Big)v_{2k-1}&=\frac{\sin^2\alpha-\alpha^2}{\alpha^2\sin^2\alpha}\cos(2u_0)v_{2k-1}\\
                                                                             &\in\sum_{j=0}^{k-1}\frac{1}{(t^\nu\lambda)^{2j}(t\lambda)^{2(k-j)}}\soooop,
\end{align*}

because the coefficient is an even analytic function of $\alpha$ and $\cos(2u_0)\in S^0(1).$ For the other term in $H_1$ we write

\begin{equation}\label{voddaltrep}
v_{2k-1}=\sum_{j=0}^{k-1}\frac{W_j(a,R)}{(t^\nu\lambda)^{2j}(t\lambda)^{2(k-j)}},\quad W_j\in \stooo,
\end{equation}

and compute

\begin{align*}
\Big(\cot\alpha-\frac{1}{\alpha}\Big)\partial_\alpha v_{2k-1}=\big(\frac{\alpha\cos\alpha-\sin\alpha}{\alpha^2\sin\alpha}\big)\sum_{j=0}^{k-1}\frac{a\partial_a W_j(a,R)+R\partial_R W_j(a,R)}{(t^\nu\lambda)^{2j}(t\lambda)^{2(k-j)}}.
\end{align*}

Since the coefficient is an even analytic function of $\alpha$ and since

\[a\partial_a:\mathcal{Q}\rightarrow \mathcal{Q}^\p,\]

this last expression can be placed in $\sum_{j=0}^{k-1}\frac{\soooop}{(t^\nu\lambda)^{2j}(t\lambda)^{2(k-j)}}.$ This takes care of $H_1(v_{2k-1}).$ The contributions of the other three terms can be analyzed just as in \cite{KST}, but we reproduce the calculations for completeness, starting with $N_{2k-1}(v_{2k-1}).$ First note that adding up the $v_j$ we get

\begin{equation}\label{ukdifference}
u_{2k-2}-u_0\in\frac{1}{(t\lambda)^2}IS^1(R\log R,Q).
\end{equation}

For this note that extra powers of $1/(t\lambda)^2$ (respectively $1/(t^\nu\lambda)^2$) can be written as powers of $b_4$ (respectively $b_5),$ and as such can safely be placed in $IS^0(1).$ Also note that from lemma \ref{lemma3.8} in the appendix, if

\[v\in\frac{1}{(t\lambda)^2}IS^3(R\log R,\mathcal{Q}),\]

then

\[\sin v\in\frac{1}{(t\lambda)^2}IS^3(R\log R,Q),\quad \cos v\in IS^0(1,\mathcal{Q}).\]

Using this and (\ref{ukdifference}) we compute

\begin{align*}
\cos(2u_0)-\cos(2u_{2k-2})&=2\cos(2u_0)\sin^2(u_{2k-2}-u_0)\\
                           &~~+2\sin(2u_0)\sin(u_{2k-2}-u_0)\cos(u_{2k-2}-u_0)\\
                           &\in\frac{1}{(t\lambda)^4}IS^6(R^2(\log R)^2,\mathcal{Q})\\
                           &~~+\frac{1}{(t\lambda)^2}IS^4(\log R,\mathcal{Q}).
\end{align*}

Hence,

\begin{align*}
t^2&\frac{\cos(2u_0)-\cos(2u_{2k-2})}{\sin^2\alpha}v_{2k-1}=\frac{\alpha^2}{\sin^2\alpha}\frac{(t\lambda)^2(\cos(2u_0)-\cos(2u_{2k-2}))}{R^2}v_{2k-1}\\
              &\in\frac{(t\lambda)^2}{R^2}\Big(\frac{1}{(t\lambda)^2}IS^4(\log R,\mathcal{Q})+\frac{1}{(t\lambda)^4}IS^6(R^2(\log R)^2,\mathcal{Q})\Big)\\
              &~~\times\sum_{j=0}^{k-1}\frac{1}{(t^\nu\lambda)^{2j}(t\lambda)^{2(k-j)}}\stooo\\
              &\subseteq \sum_{j=0}^{k-1}\frac{\Big(IS^5(R^{-1}(\log R)^{2k},\mathcal{Q})+\frac{1}{(t\lambda)^2}IS^7(R(\log R)^{2k+1},\mathcal{Q})\Big)}{(t^\nu\lambda)^{2j}(t\lambda)^{2(k-j)}}\\
              &\subseteq \sum_{j=0}^{k-1}\frac{1}{(t^\nu\lambda)^{2j}(t\lambda)^{2(k-j)}}IS^5(R(\log R)^{2k-1},\mathcal{Q})\\
              &\subseteq \sum_{j=0}^{k-1}\frac{1}{(t^\nu\lambda)^{2j}(t\lambda)^{2(k-j)}}\soooop
\end{align*}

Here in the step before the last we have pulled out a factor of $b^2$ from the second factor, and given away a factor of $R^2$ to gain a factor of $\log R$ in the first factor. For the next term in $N_{2k-1}(v_{2k-1}),$ note also that

\[1-\cos(2v_{2k-1})=v^2_{2k-1}g(v^2_{2k-2})\]

for some analytic function $g.$ Moreover writing $1/(t\lambda)^2=b_4$ and $1/(t^\nu\lambda)^2=b_5,$ we can think of $v_{2k-1}$ as belonging to $\stooo$ whenever we do not need the extra time decay. Therefore since $g$ has a convergent Taylor series at $0,~ g(v_{2k-2})\in S^0(1,\mathcal{Q}),$ and hence

\[1-\cos(2v_{2k-1})\in\Big(\sum_{j=0}^{k-1}\frac{1}{(t^\nu\lambda)^{2j}(t\lambda)^{2(k-j)}}\stooo\Big)^2.\]

Using this observation we compute

\begin{align*}
t^2&\frac{\sin(2u_{2k-2})}{2\sin^2\alpha}(1-\cos(2v_{2k-1}))\\
                              &\in\frac{\alpha^2}{\sin^2\alpha}\frac{(t\lambda)^2}{R^2}\Big(IS^1(R^{-1},\mathcal{Q})+\frac{1}{(t\lambda)^2}IS^3(R\log R,\mathcal{Q})\Big)\\
                              &~~\times\Big(\sum_{j=0}^{k-1}\frac{1}{(t^\nu\lambda)^{2j}(t\lambda)^{2(k-j)}}\stooo\Big)^2\\
                              &\subseteq \sum_{j=0}^{k-1}\frac{1}{(t^\nu\lambda)^{2j}(t\lambda)^{2(k-j)}}\Big(\sum_{i=0}^{k-1}\frac{1}{(t^\nu\lambda)^{2i}(t\lambda)^{2(k-1-i)}}IS^5(R^{-1}(\log R)^{4k-2},\mathcal{Q})\\
                              &\quad~~+\sum_{i=0}^{k-1}\frac{1}{(t^\nu\lambda)^{2i}(t\lambda)^{2(k-i)}}IS^7(R(\log R)^{4k-1},\mathcal{Q})\Big)\\
                              &\subseteq \sum_{j=0}^{k-1}\frac{1}{(t^\nu\lambda)^{2j}(t\lambda)^{2(k-j)}} IS^5(R (\log R)^{2k-1},\mathcal{Q}).
\end{align*}

We have again pulled out factors of $b_1^lb_2^m,~m+l=k,$ to pass to the last inclusion. Finally, using a similar observation as above,

\begin{align*}
t^2\frac{\cos(2u_{2k-2})}{\sin^2\alpha}&(2v_{2k-1}-\sin(2v_{2k-1}))\\
                                       &\in\frac{(t\lambda)^2}{R^2}IS^0(1,\mathcal{Q})\Big(\sum_{j=0}^{k-1}\frac{IS^3(R(\log R)^{2k-1},\mathcal{Q})}{(t^\nu\lambda)^{2j}(t\lambda)^{2(k-j)}}\Big)^3\\
                                       &\subseteq \sum_{j,l,m=0}^{k-1}\frac{1}{(t^\nu\lambda)^{2(j+l+m)}(t\lambda)^{2(3k-j-l-m)-2}}IS^7(R(\log R)^{6k-3},\mathcal{Q})\\
                                       &\subseteq \sum_{j=0}^{k-1}\frac{1}{(t^\nu\lambda)^{2j}(t\lambda)^{2(k-j)}}IS^7(R(\log R)^{2k-1},\mathcal{Q}).
\end{align*}

This concludes the analysis of $N_{2k-1}(v_{2k-1}).$ For $E^tv_{2k-1}$ we can ignore the dependence on $a$ and therefore the observation that

\[t^2\partial^2_t\Big(\sum_{j=0}^{k-1}\frac{IS^3(R(\log R)^{2k-1})}{(t^\nu\lambda)^{2j}(t\lambda)^{2(k-j)}}\Big)\subseteq \sum_{j=0}^{k-1}\frac{IS^1(R(\log R)^{2k-1})}{(t^\nu\lambda)^{2j}(t\lambda)^{2(k-j)}},\]

shows that the contribution of this term can be placed in the right space. Finally for $E^av_{2k-1}$ we use the representation (\ref{voddaltrep}) again, together with the fact that

\begin{equation}\label{QQpoperators}
a\partial_a,a^{-1}\partial_a,(1-a^2)\partial^2_a:\mathcal{Q}\rightarrow \mathcal{Q}^\p.
\end{equation}

The contribution of $\frac{W_j(R,a)}{(t^\nu\lambda)^{2j}(t\lambda)^{2(k-j)}}$ to $t^2E^av_{2k-1}$ is

\begin{align*}
\frac{1}{(t^\nu\lambda)^{2j}(t\lambda)^{2(k-j)}}\big[&2(2j+2\nu(k-j)a\partial_aW_j)-2(\nu+1)Ra\partial^2_{Ra}W_j\\
                                                     &+2Ra^{-1}\partial^2_{Ra}W_j+a^{-1}\partial_aW_j\\
                                                     &+(1-a^2)\partial^2_aW_j-2a\partial_aW_j\big]
\end{align*}

In view of (\ref{QQpoperators}), if we sum the expression above over $j$ we see that

\begin{align*}
t^2E^av_{2k-1}\in\sum_{j=0}^{k-1}\frac{1}{(t^\nu\lambda)^{2j}(t\lambda)^{2(k-j)}}\soooop,
\end{align*}

as desired.\\


Step $3:$ define $v_{2k}$ such that (\ref{veven}) holds.\\

Write $e_{2k-1}=\sum_{l=0}^{k-1}e_{2k-1,l}$ with $t^2e_{2k-1,l}\in\frac{\soooop}{(t^\nu\lambda)^{2l}(t\lambda)^{2(k-l)}}.$ Setting $b_i=0$ we write the asymptotic component $f_{2k-1,l}$ of the principal part of $e_{2k-1,l}$ near $R=\infty$ as

\begin{align*}
t^2f_{2k-1,l}&=\frac{R}{(t^\nu\lambda)^{2l}(t\lambda)^{2(k-l)}}\sum_{j=0}^{2k-1}q_j(a)(\log R)^j\\
              &=\frac{1}{(t^\nu\lambda)^{2l}(t\lambda)^{2(k-l)-1}}\sum_{j=0}^{2k-1}aq_j(a)(\log R)^j,
\end{align*}

with $q_j\in\mathcal{Q}^\p.$ In view of (\ref{veveneq}) we consider

\begin{align*}
t^2(-\partial_t^2+\partial_\alpha^2+\frac{1}{\alpha}\partial_\alpha-\frac{1}{\alpha^2})w_{2k,l}=t^2f_{2k-1},
\end{align*}

and homogeneity considerations lead us to seek solutions of the form

\begin{align*}
w_{2k,l}=\frac{1}{(t^\nu\lambda)^{2l}(t\lambda)^{2(k-l)-1}}\sum_{j=0}^{2k-1}W^j_{2k,l}(a)(\log R)^j.
\end{align*}

Matching the powers of $\log R$ we see that $W^j_{2k,l}$ has to satisfy

\begin{align*}
t^2\Big(-\partial_t^2+\partial_\alpha^2+\frac{1}{\alpha}\partial_\alpha-\frac{1}{\alpha^2}\Big)\Big(\frac{W^j_{2k,l}}{(t^\nu\lambda)^{2l}(t\lambda)^{2(k-l)-1}}\Big)=\frac{aq_j(q)-F_j(a)}{(t^\nu\lambda)^{2l}(t\lambda)^{2(k-l)-1}},
\end{align*}

where with $\beta=\beta(k,l)=2l+\nu(2(k-l)-1),$

\begin{align}
F_j(a)=&(j+1)\big[((1+\nu)\beta+a^{-2})W^{j+1}_{2k,l}\nonumber\\
       &~~~~~~~~~~+2(a^{-1}-(1+\nu)a)\partial_aW^{j+1}_{2k,l}\big]\nonumber\\
       &(j+2)(j+1)(a^{-2}-(1+\nu)^2)W^{j+2}_{2k,l}. \label{aFj}
\end{align}

We are using the convention $W^j_{2k,l}=0$ for $j\geq 2k$ here, and will solve this system successively in $j$ as $j$ decreases from $2k-1$ to $0.$ Conjugating the power of $t$ we get

\begin{align*}
t^2\Big(-(\partial_t+\frac{\beta}{t})^2+\partial_\alpha^2+\frac{1}{\alpha}\partial_\alpha-\frac{1}{\alpha^2}\Big)W^j_{2k,l}=aq_j(a)-F_j(a).
\end{align*}

With

\begin{equation}\label{lbeta}
L_{\beta}:=(1-a^2)\partial_a^2+(a^{-1}+2a\beta-2a)\partial_a+(-\beta^2+\beta-a^{-2}),
\end{equation}

we rewrite this last equation as

\begin{equation}\label{aequation}
L_\beta W^j_{2k,l}=aq_j(a)-F_j(a).
\end{equation}

Our goal is to show that with zero Cauchy data imposed at $a=0,$

\begin{equation}\label{aimprovement}
W^j_{2k,l}\in a^3 \mathcal{Q},\quad j=0,1,\dots, 2k-1,\quad l=0,1,\dots,k-1.
\end{equation}

For this we will use lemma \ref{lemma 3.9} in the appendix. In particular, we use the fact that for $\beta>1/2$ the unique solution to the equation

\begin{align*}
L_\beta w=f,\quad w(0)=0, \quad \partial_a w(0)=0,
\end{align*}

has the form

\begin{align}
w(a)=c_1\phi_1(a)+c_2\phi_2(a)&+c_3\phi_1(a)\int_a^1\phi_2(a^\p)q_1(a^\p)f(a^\p)da^\p\nonumber\\
                           &+c_4\phi_2(a)\int_{a_0}^a\phi_1(a^\p)q_1(a^\p)f(a^\p)da^\p. \label{asolutionform}
\end{align}

Here $a_0<1$ is some number close to $1,$ the $c_i$ are constants which may depend on $\beta,$ and the fundamental solutions $\phi_i$ can be written as

\begin{equation}\label{aphirational}
\phi_1(a)=r_1(a),\quad \phi_2(a)=r_2(a)(1-a)^{\beta+\frac{1}{2}},
\end{equation}

with $r_1,~r_2$ analytic, if $\beta-\frac{1}{2}\notin \ZZ^{+},$ and $\phi_1$ has to be modified to

\begin{equation}\label{aphiirrational}
\phi_1(a)=r_1(a)+c\phi_2(a)\log(1-a),
\end{equation}

for some constant $c$ (depending on $\beta$) if $\beta-\frac{1}{2}\in \ZZ^{+}.$ The function $q_1$ appearing in the particular solution can be written as

\begin{equation}\label{aqone}
q_1(a)=(1-a)^{-\beta-\frac{1}{2}}r_3(a),
\end{equation}

with $r_3$ analytic near $a=1.$

The following integral identity, valid for $\gamma\neq -1$ and positive integers $m,$ will also be useful

\begin{align}\label{aintegralid}
\int_a^1(1-a^\p)^\gamma&[\log(1-a^\p)]^mda^\p\nonumber\\
                       &=\frac{1}{\gamma+1}[\log(1-a)]^m(1-a)^{\gamma+1}\nonumber\\
                       &~~-\frac{m}{\gamma+1}\int_a^1(1-a^\p)^{\gamma}[\log(1-a^\p)]^{m-1}da^\p.
\end{align}

We will prove (\ref{aimprovement}) inductively on $k.$ We will write $c$ for generic constants, which may depend on $\beta$ and $\nu.$ Note that near $a=1,$ proving (\ref{aimprovement}) for fixed $l$ is tantamount to finding an absolutely convergent expansion

\begin{align}
W^j_{2k,l}(a)=q_0(a)+\sum_{i=1}^\infty&\Big((1-a)^{(2i-1)\nu+\frac{1}{2}}\sum_{j=0}^{\infty}q_{2i-1,j}(a)(\log(1-a))^j\nonumber\\
                                      &+(1-a)^{2i\nu+1}\sum_{j=0}^{\infty}q_{2i,j}(a)(\log(1-a))^j\Big),\label{aimprovementone}
\end{align}

such that $q_0$ and $q_{ij}$ satisfy the conditions of definition \ref{Qdef}. To start off the induction, note that when $k=1,$  $l=0$ so we are in the same setting as in \cite{KST}, and (\ref{aequation}) becomes

\begin{align*}
&L_\nu W^1_{2,0}(a)=ah(a),\\
&L_\nu W^0_{2,0}(a)=(c-a^{-2})W^1_{2,0}(a)+(ca-a^{-1})\partial_a W^1_{2,0}(a),
\end{align*}

where $h$ is analytic with an even expansion at $a=0.$ The behavior near $a=0$ is a direct consequence of part (i) of lemma \ref{lemma 3.9}. For $a$ close to $1$ we denote by $h_i$ and $g_i$ generic functions that are analytic near $a=1,$ and use (\ref{asolutionform})-(\ref{aintegralid}) to write

\begin{align*}
&W^1_{2,0}(a)=g_0(a)+g_1(a)(1-a)^{\nu+\frac{1}{2}},\\
&W^0_{2,0}(a)=h_0(a)+h_1(a)(1-a)^{\nu+\frac{1}{2}} +h_2(a)(1-a)^{\nu+\frac{1}{2}}\log(1-a),
\end{align*}

if $\nu-\frac{1}{2}\notin\ZZ^{+},$ and

\begin{align*}
&W^1_{2,0}(a)=g_0(a)+g_1(a)(1-a)^{\nu+\frac{1}{2}} +g_2(a)(1-a)^{\nu+\frac{1}{2}}\log(1-a),\\
&W^0_{2,0}=h_0(a)+(1-a)^{\nu+\frac{1}{2}}\sum_{m=0}^2h_{m+1}(a)[\log(1-a)]^m\\
&\quad\quad\quad     +(1-a)^{2\nu+1}\sum_{m=0}^2h_{m+4}(a)[\log(1-a)]^m,
\end{align*}

if $\nu-\frac{1}{2}\in\ZZ^{+}.$ This finishes the first step of the induction. Now we assume we have proved (\ref{aimprovement}) up to $k-1$ and prove it for $k.$ Here we are assuming that once we establish (\ref{aimprovement}) we can prove (\ref{veven}) and (\ref{eeven}) as well, and therefore we assume that $q_j\in\mathcal{Q}^\p$ in (\ref{aequation}). That this can be done, is shown below in the remainder of step $3$ and in step $4.$ \footnote{This is not circular; in fact one can view this step as part of the larger iteration scheme (\ref{vodd})-(\ref{eeven}).}

We fix $l\leq k-1,$ and write $\beta$ for

\begin{equation}\label{beta}
\beta(k,l)=2l+(2(k-l)-1)\nu.
\end{equation}

We will generically write $g_j,~h_j,~g_{i,j}$ or $h_{i,j}$ for functions that are analytic near $a=1.$ The first equation we need to solve is

\begin{align*}
L_\beta W^{2k-1}_{2k,l}=aq_{2k-1}(a).
\end{align*}

Again the behavior near $a=0$ is a consequence of part (i) of lemma \ref{lemma 3.9}. For the behavior bear $a=1,$ note that the solution is given by the right hand side of (\ref{asolutionform}) with $f(a)=aq_{2k-1}(a).$ The contribution of $c_1\phi_1+c_2\phi_2$ can be written as

\[g_0(a)+g_1(a)(1-a)^{\beta+\frac{1}{2}}+g_2(a)(1-a)^{\beta+\frac{1}{2}}\log(1-a).\]

Note that if $l=0$ then

\[(1-a)^{\beta+\frac{1}{2}} =(1-a)^{(2k-1)\nu+\frac{1}{2}}\]

and otherwise

\[(1-a)^{\beta+\frac{1}{2}}=(1-a)^{(2m+1)\nu+\frac{1}{2}}g_4(a),\]

where $g_4(1)=0,$ and $m$ is an integer. This shows that the contribution of $c_1\phi_1+c_2\phi_2$ has the right form. Noting that $\phi_2q_1$ is analytic near $a=1,$ we can write the contribution of $\phi_1\int_a^1\phi_2q_1fda^\p$ as

\begin{align*}
\phi_1\Bigg(g_0(a)+\sum_{i=1}^\infty&\Big((1-a)^{(2i-1)\nu+\frac{1}{2}}\sum_{j=0}^{\infty}g_{2i-1,j}(a)(\log(1-a))^j\\
                                         &+(1-a)^{2i\nu+1}\sum_{j=0}^{\infty}g_{2i,j}(a)(\log(1-a))^j\Big)\Bigg).
\end{align*}

Regardless of whether $\nu$ is rational, this can be written as

\begin{align*}
g_0(a)+\sum_{i=1}^\infty&\Big((1-a)^{(2i-1)\nu+\frac{1}{2}}\sum_{j=0}^{\infty}g_{2i-1,j}(a)(\log(1-a))^j\\
                                         &+(1-a)^{2i\nu+1}\sum_{j=0}^{\infty}g_{2i,j}(a)(\log(1-a))^j\\
                                         &+(1-a)^{(2i-1)\nu+\beta+1}\sum_{j=0}^{\infty}h_{2i,j}(a)(\log(1-a))^j\\
                                         &+(1-a)^{2i\nu+\beta+\frac{3}{2}}\sum_{j=0}^{\infty}h_{2i-1,j}(a)(\log(1-a))^j\Big).
\end{align*}

According to (\ref{beta}),

 \[(1-a)^{(2i-1)\nu+\beta+1}=(1-a)^{2m\nu+1}\]

for some integer $m\geq i$ (depending on $i$), and

 \[(1-a)^{2i\nu+\beta+\frac{3}{2}}=(1-a)^{(2n-1)\nu+\frac{1}{2}}(1-a)\]

for some integer $n\geq i+1$ (depending on $i$), and therefore the expression above can be included in the right hand side of (\ref{aimprovementone}) (that for fixed $i$ the sums over $j$ are finite follows from the corresponding fact for $f$). We consider the contribution of $\phi_2\int_{a_0}^a\phi_1q_1fda^\p$ next. Here we may get terms of the form

\[c(1-a)^{-1}[\log(1-a)]^m\]

in the integrand, which cannot be dealt with using (\ref{aintegralid}), and which contribute extra logarithms. To make this more precise, write

\[\phi_1(a)=g_0(a)+g_1(a)(1-a)^{\beta+\frac{1}{2}}\log(1-a).\]

First we consider the contribution of the analytic part $g_0.$ This gives integrals of the forms

\begin{align*}
&h_0(a)(1-a)^{\beta+\frac{1}{2}}\int_{a_0}^ag_{2i-1,j}(a^\p)(1-a^\p)^{(2i-1)\nu-\beta-1}(\log(1-a^\p))^jda^\p,\quad \mathrm{or}\\
&h_0(a)(1-a)^{\beta+\frac{1}{2}}\int_{a_0}^ag_{2i,j}(a^\p)(1-a^\p)^{2i\nu-\beta-\frac{1}{2}}(\log(1-a^\p))^jda^\p.
\end{align*}

Both of these have the right form, and only add an extra power of $\log (1-a)$ if $(2i-1)\nu-\beta-1$ or $2i\nu-\beta-\frac{1}{2}$ are negative integers. For the contribution of $g_1(a)(1-a)^{\beta+\frac{1}{2}}\log(1-a),$ note that

\[q_1(a)g_1(a)(1-a)^{\beta+\frac{1}{2}}\log(1-a)=h_1(a)\log(1-a),\]

so we need to consider integrals of the form

\begin{align*}
&h_0(a)(1-a)^{\beta+\frac{1}{2}}\int_{a_0}^ag_{2i-1,j}(a^\p)(1-a^\p)^{(2i-1)\nu-\frac{1}{2}}(\log(1-a^\p))^{j+1}da^\p,\quad \mathrm{or}\\
&h_0(a)(1-a)^{\beta+\frac{1}{2}}\int_{a_0}^ag_{2i,j}(a^\p)(1-a^\p)^{2i\nu}(\log(1-a^\p))^{j+1}da^\p,
\end{align*}

which again have the right form. This completes the case of $j={2k-1}.$

For $j<2k-1,$ we can assume by induction that $W^{j+1}_{2k,l}$ and $W^{j+2}_{2k,l}$ have the right form, and in particular they have a cubic Taylor expansion at $a=0.$ It follows from (\ref{aFj}) that $aq_j(a)-F_j(a)$ has a Taylor expansion at $a=0$ beginning with a linear term. The behavior of $W^{j}_{2k,l}$ at $a=0$ is therefore a consequence of part (i) of lemma \ref{lemma 3.9}. Moreover, $\mathcal{Q}\subseteq\mathcal{Q}^\p$ implies that $aq_j(a)-F_j(a)\in\mathcal{Q}^\p,$ and the expansion of $W^{j}_{2k,l}$ follows in exactly the same way as above. This finishes the proof of (\ref{aimprovement}).

We cannot use $w_{2k,l}$ for $v_{2k,l},$ because $\log R$ is singular at $R=0.$ However, in view of the computations above, we define

\[v_{2k,l}:\frac{1}{(t^\nu\lambda)^{2l}(t\lambda)^{2(k-l)-1}}\sum_{j=0}^{2k-1}W^j_{2k,l}(a)\Big(\frac{1}{2}\log (1+R^2)\Big)^j,\]

and analyze the error near $R=0$ generated by this modification in the next step. Pulling out a factor of $a^3=R^3/(t\lambda)^3$ we see that

\[v_{2k}:=\sum_{j=0}^{k-1}v_{2k,j}\in \sum_{j=0}^{k-1}\frac{1}{(t^\nu\lambda)^{2j}(t\lambda)^{2(k+1-j)}}\sttoe.\]
Step $4:$ show that with $v_{2k}$ defined as in the previous step, (\ref{eeven}) holds.\\

We write

\begin{align*}
t^2e_{2k}=&t^2(e_{2k-1}-e_{2k-1}^0)\\
          &+t^2(e^0_{2k-1}-(-\partial_t^2+\partial_\alpha^2+\frac{1}{\alpha}\partial_\alpha-\frac{1}{\alpha^2})v_{2k})\\
          &+t^2N_{2k}(v_{2k})+t^2H_0(v_{2k}),
\end{align*}

with $N_{2k}$ and $H_0$ as in (\ref{Neven}) and (\ref{Heven}) respectively, and

\[t^2e^0_{2k-1}=\sum_{l=0}^{k-1}\frac{R}{(t^\nu\lambda)^{2l}(t\lambda)^{2(k-l)}}\sum_{j=0}^{2k-1}q_{l,j}(a)\Big(\frac{1}{2}\log(1+ R^2)\Big)^j.\]

The first term has the form

\begin{align*}
t^2(e_{2k-1}-e^0_{2k-1})\in\sum_{l=0}^{k-1}\frac{1}{(t^\nu\lambda)^{2l}(t\lambda)^{2(k-l)}}\Big[&IS^1(R^{-1}(\log R)^{2k},\mathcal{Q}^\p)\\
                                                                                               &+\sum_{i=1}^3b_iIS^1(R(\log R)^{2k-1},\mathcal{Q}^\p)\Big].
\end{align*}

The last sum is contained (\ref{eeven}). For the first term we can write, with $w\in IS^1(R^{-1}(\log R)^{2k},\mathcal{Q}^\p),$

\[w=(1-a^2)w+\frac{R^2w}{(t\lambda)^2}\in IS^1(R^{-1}(\log R)^{2k}, \mathcal{Q})+ b_1IS^1(R(\log R)^{2k-1},\mathcal{Q}^\p),\]

and therefore the first sum can be placed in (\ref{eeven}) too. According to the computations in step 3, the second term in $e_{2k}$ would be zero if we had $\log R$ instead of $\frac{1}{2}\log(1+R^2)$ in both $e^0_{2k}$ and $v_{2k}.$ The difference is therefore obtained, if we replace the derivatives of $\frac{1}{2}\log(1+R^2)$ by derivatives of $\log R$ in

\begin{align*}
&t^2(\partial_t^2+\partial_\alpha^2+\frac{1}{\alpha}\partial_\alpha)v_{2k}\\
&=t^2\Big(\partial_t^2+\partial_\alpha^2+\frac{1}{\alpha}\partial_\alpha\Big)\Big(\sum_{l=0}^{k-1}\frac{1}{(t^\nu\lambda)^{2l}(t\lambda)^{2(k-l)-1}}\times\\
&\quad\quad\quad\quad\quad\quad\quad\quad\quad\quad\quad\quad\quad \sum_{j=0}^{2k-1}W_{2k,l}^j(a)\Big(\frac{1}{2}\log(1+R^2)\Big)^j\Big).
\end{align*}

This difference can be written as a sum of expressions of the form

\begin{align*}
\frac{1}{(t^\nu\lambda)^{2l}(t\lambda)^{2(k-l)-1}}\sum_{j=0}^{2k-1}\Bigg(\frac{W^j_{2k,l}(a)}{a^2}\Big[&S^0(R^{-2})(\log(1+R^2))^{j-1}\\
                                                                                               &+S^0(R^{-2})(\log(1+R^2))^{j-2}\Big]\\
                                                                                               &+\frac{\partial_aW^j_{2k,l}(a)}{a}S^0(R^{-2})(\log(1+R^2))^{j-1}\Bigg).
\end{align*}

Since $W^{j}_{2k,l}$ is cubic at $0,$ we can pull out a factor of $a$ and see that this belongs to

\[\frac{1}{(t^\nu\lambda)^{2l}(t\lambda)^{2(k-l)}}IS^1(R^{-1}(\log(1+R^2))^{2k-2},\mathcal{Q}^\p),\]

which is admissible as we saw above. We consider $H_0(v_{2k})$ next. We write

\[f(\alpha)=\frac{\alpha^2-\sin^2\alpha}{\alpha^2\sin^2\alpha},\]

and note that $f$ is an analytic function of $b_3.$ We also write $W_{2k,l}^j(a)=a^3G^j_{2k,l}(a)$ with $G^j_{2k,l}\in\mathcal{Q}.$ There are two terms to consider. The first is

\begin{align*}
t^2(\frac{\alpha^2-\sin^2\alpha}{\alpha^2\sin^2\alpha})v_{2k}&=f(\alpha)\sum_{l=0}^{k-1}\sum_{j=0}^{2k-1}\frac{t^2a^3G^j_{2k,l}(a)}{(t^\nu\lambda)^{2l}(t\lambda)^{2(k-l)-1}}\Big(\frac{1}{2}\log(1+R^2)\Big)^j\\
                                                             &=f(\alpha)\sum_{l=0}^{k-1}\sum_{j=0}^{2k-1}\frac{Rb_3G^j_{2k,l}(a)}{(t^\nu\lambda)^{2l}(t\lambda)^{2(k-l)}}\Big(\frac{1}{2}\log(1+R^2)\Big)^j\\
                                                             &\in \sum_{l=0}^{k-1}\frac{1}{(t^\nu\lambda)^{2l}(t\lambda)^{2(k-l)}}b_3\soooep,
\end{align*}

as desired. The other term is similar. Write

\[g(\alpha)=\frac{\sin\alpha-\alpha\cos\alpha}{\alpha^2\sin\alpha},\]

and note that $g$ is an analytic function of $b_3.$ The remaining term in $H_0$ is

\begin{align*}
t^2g(\alpha)\alpha\partial_\alpha v_{2k}.
\end{align*}

We expand $v_{2k}$ as before. If $\alpha\partial_\alpha$ hits $G^j_{2k,l},$ the fact that $a\partial_a$ maps $\mathcal{Q}$ to $\mathcal{Q}^\p$ allows us to estimate the resulting term as above. Similarly $\alpha\partial_\alpha(a^3)=3a^3,$ so the contribution can be included in (\ref{eeven}) if $\alpha\partial_\alpha$ hits $a^3.$ It remains to consider

\[\alpha\partial_\alpha\Big((\frac{1}{2}\log(1+R^2))\Big)^j=\frac{jR^2}{1+R^2}\Big(\frac{1}{2}\log(1+R^2)\Big)^{j-1},\]

which is even better than what we already had. This completes the analysis of $H_0(v_{2k}).$ Finally, we consider the nonlinearity $N_{2k}(v_{2k}).$ This is dealt with exactly as in \cite{KST}, but we reproduce the calculations for completeness. Summing up the $v_j$ we have

\[u_{2k-1}-u_0\in\frac{1}{(t\lambda)^2}IS^3(R\log R,\mathcal{Q}).\]

We will write $f(\alpha)$ for a generic even analytic function of $\alpha.$ Using lemma \ref{lemma3.8} in the appendix we get

\begin{align*}
t^2\frac{1-\cos(2u_{2k-1})}{\sin^2\alpha}v_{2k}&=f(\alpha)\frac{(t\lambda)^2}{R^2}(1-\cos(2u_{2k-1}))v_{2k}\\
                                               &\in\frac{(t\lambda)^2}{R^2}\Big(IS^1(R^{-1},\mathcal{Q})+\frac{1}{(t\lambda)^2}IS^3(R\log R,\mathcal{Q})\Big)^2\\
                                               &~~\times\sum_{j=0}^{k-1}\frac{1}{(t^\nu\lambda)^{2j}(t\lambda)^{2(k+1-j)}}IS^3(R^3(\log R)^{2k-1},\mathcal{Q})\\
                                               &\subseteq \sum_{j=0}^{k-1}\frac{1}{(t^\nu\lambda)^{2j}(t\lambda)^{2(k-j)}}\Big(IS^3(R^{-1}(\log R)^{2k-1},\mathcal{Q})\\
                                               &\quad\quad+\frac{1}{(t\lambda)^{2}}IS^5(R(\log R)^{2k},\mathcal{Q})\\
                                               &\quad\quad+\frac{1}{(t\lambda)^4}IS^7(R^3(\log R)^{2k-1},\mathcal{Q})\Big)\\
                                               &\subseteq\sum_{j=0}^{k-1}\frac{1}{(t^\nu\lambda)^{2j}(t\lambda)^{2(k-j)}}\Big(IS^3(R^{-1}(\log R)^{2k-1},\mathcal{Q})\\
                                               &\quad\quad\quad+\frac{b_1}{(t\lambda)^2}IS^5(R(\log R)^{2k-1},\mathcal{Q})\Big).
\end{align*}

For the quadratic term we have

\begin{align*}
t^2\frac{\sin(2u_{2k-1})}{2\sin^2\alpha}&(1-\cos(2v_{2k}))\\
                                        &\in\frac{(t\lambda)^2}{R^2}\Big(IS^1(R^{-1},\mathcal{Q})+\frac{1}{(t\lambda)^2}IS^3(R\log R,\mathcal{Q})\Big)\\
                                        &~~\times\Big(\sum_{j=0}^{k-1}\frac{1}{(t^\nu\lambda)^{2j}(t\lambda)^{2(k+1-j)}}IS^3(R^3(\log R)^{2k-1},\mathcal{Q})\Big)^2\\
                                        &\subseteq \sum_{j=0}^{k-1}\frac{1}{(t^\nu\lambda)^{2j}(t\lambda)^{2(k-j)}}\Big(\sum_{l=0}^{k-1}\frac{IS^5(R^3(\log R)^{4k-2},\mathcal{Q})}{(t^\nu\lambda)^{2l}(t\lambda)^{2(k+1-l)}}\\
                                        &\quad\quad\quad+\sum_{l=0}^{k-1}\frac{IS^7(R^5(\log R)^{4k-1},\mathcal{Q})}{(t^\nu\lambda)^{2l}(t\lambda)^{2(k+2-l)}}\Big)\\
                                        &\subseteq \sum_{j=0}^{k-1}\frac{1}{(t^\nu\lambda)^{2j}(t\lambda)^{2(k-j)}}\big(IS^1(R^{-1}(\log R)^{2k},\mathcal{Q})\\
                                        &\quad\quad\quad+\sum_{i=1}^2b_iIS^3(R(\log R)^{2k-1},\mathcal{Q})\big).
\end{align*}

Finally, the cubic term can be bounded as

\begin{align*}
t^2\frac{\cos(2u_{2k-1})}{\sin^2\alpha}&(2v_{2k}-\sin(2v_{2k}))\\
                                       &\in\frac{(t\lambda)^2}{R^2}\Big(\sum_{j=0}^{k-1}\frac{IS^3(R^3(\log R)^{2k-1},\mathcal{Q})}{(t^\nu\lambda)^{2j}(t\lambda)^{2(k-j)}}\Big)^3\\
                                       &\subseteq \sum_{j,l,m=0}^{k-1}\frac{IS^7(R^7(\log R)^{6k-3},\mathcal{Q})}{(t^\nu\lambda)^{2(j+m+l)}(t\lambda)^{2(3k+2-j-l-m)}}\\
                                       &\subseteq\sum_{j=0}^{k-1}\sum_{i=0}^{4k-2}\frac{a^6b_1^ib_2^{4k-2-i}}{(t^\nu\lambda)^{2j}(t\lambda)^{2(k-j)}}IS^1(R(\log R)^{2k-1},\mathcal{Q})\\
                                       &\subseteq \sum_{i=1}^2\sum_{j=0}^{k-1}\frac{b_i}{(t^\nu\lambda)^{2j}(t\lambda)^{2(k-j)}}IS^1(R(\log R)^{2k-1},\mathcal{Q}^\p).
\end{align*}

This finishes the proof of Theorem \ref{elliptic modifier}.
\end{proof}
\section{The Perturbed Equation}
We will now start the construction of a complete solution from the approximate solution of the previous section. We let

\[u(t,\alpha)=u_{2k-1}(t,\alpha)+\varepsilon(t,\alpha).\]

This section is devoted to deriving the equation $\varepsilon$ has to satisfy for $u$ to be a solution, and recasting this equation in a new coordinate system which allows easier analysis of the problem. In the $\alpha-t$ coordinates $\varepsilon$ has to satisfy

\begin{align*}
-\varepsilon_{tt}+\varepsilon_{\alpha\alpha}+\cot\alpha\varepsilon_\alpha-\frac{\cos(2Q(\lambda\alpha))}{\sin^2\alpha}\varepsilon=N_{2k-1}(\varepsilon)+e_{2k-1},
\end{align*}

or

\begin{equation}\label{epsilontalpha}
-\varepsilon_{tt}+\varepsilon_{\alpha\alpha}+\frac{1}{\alpha}\varepsilon_\alpha-\frac{\cos(2Q(\lambda\alpha))}{\alpha^2}\varepsilon=N_{2k-1}(\varepsilon)+H_1(\varepsilon)+e_{2k-1}.
\end{equation}

Here $e_{2k-1}$ is the error at step $2k-1$ from the previous section. $N_{2k-1}$ is defined in (\ref{Nodd}), but with $u_{2k}$ replaced by $u_{2k-1},$ and $H_1$ is given by $(\ref{Hodd}).$  We change variables to $R=\lambda(t)\alpha$ and $\tau=\frac{-1}{\nu}t^{-\nu}$ (so $\frac{\partial t}{\partial\tau}=\frac{1}{\lambda}),$ and let $v(\tau,R):=\varepsilon(t(\tau),\lambda^{-1}R).$ Inserting this into (\ref{epsilontalpha}) and rearranging we get

\begin{align*}
&-\Big[\Big(\partial_\tau+\frac{\lambda_\tau}{\lambda}R\partial_R\Big)^2+\frac{\lambda_\tau}{\lambda}\Big(\partial_\tau+\frac{\lambda}{\lambda_\tau}R\partial_R\Big)\Big]v\\
&~\quad\quad\quad\quad\quad+\Big(\partial_R^2+\frac{1}{R}\partial_R-\frac{\cos(2Q(R))}{R^2}\Big)v\\
&\quad\quad\quad\quad\quad\quad\quad=\frac{1}{\lambda^2}[N_{2k-1}(\varepsilon)+H_1(\varepsilon)+e_{2k-1}](t(\tau),\lambda^{-1}R).
\end{align*}

Our next goal is to transform $\partial_R^2+\frac{1}{R}\partial_R-\frac{\cos(2Q(R))}{R^2}$ into a self adjoint operator on $L^2(\RR^{+},dR).$ To accomplish this we let $\ep(\tau,R):=R^{\frac{1}{2}}v(\tau,R),$ and rewrite the equation above in terms of $\ep$ as

\begin{align}
\Big(-\Big(\partial_\tau+\frac{\lambda_\tau}{\lambda}R\partial_R\Big)^2&+\frac{1}{4}(\frac{\lambda_\tau}{\lambda})^2+\frac{1}{2}\partial_\tau(\frac{\lambda_\tau}{\lambda})\Big)\ep-\LL\ep\nonumber\\
&=\lambda^{-2}R^{\frac{1}{2}}(N_{2k-1}(R^{-\frac{1}{2}}\ep)+H_1(R^{-\frac{1}{2}}\ep)+e_{2k-1}), \label{epeq}
\end{align}

where

\begin{equation}\label{Ldef}
\LL:=-\partial_R^2+\frac{3}{4R^2}-\frac{8}{(1+R^2)^2}.
\end{equation}

(\ref{epeq}) is the main equation we have to solve in this paper. $\LL$ is self adjoint with respect to $L^2(\RR^{+},dR)$ (see the appendix for the definition of the domain of $\LL$) and its spectral properties are summarized in lemma \ref{spectrallemma} of the appendix. We will adopt the notation of lemma \ref{spectrallemma} in the remainder of this paper. In particular recall the definition of the distorted Fourier transform associated with $\LL$

\[\FF:f\mapsto \hat{f}(\xi)=\lim_{b\rightarrow\infty}\int_0^b\phi(R,\xi)f(R)dR,\]

with inverse

\[\FF^{-1}:\hat{f}\mapsto f(r)=\lim_{\mu\rightarrow\infty}\int_0^\mu\phi(R,\xi)\hat{f}(\xi)\rho(\xi)d\xi.\]

Here $\rho(\xi)d\xi$ is the spectral measure of $\LL.$ The idea is to expand $\ep$ in terms of the generalized Fourier basis $\phi(R,\xi)$ as

\[\ep(\tau,R)=\int_0^\infty x(\tau,R)\phi(R,\xi)\rho(\xi)d\xi,\]

and derive a transport like equation for the coefficient $x(\tau,\xi)$ from (\ref{epeq}). The operator $R\partial_R$ in (\ref{aequation}) is not diagonal in the Fourier basis, but we can use the construction in \cite{KST} to deal with the error operator $\KK$ defined by

\begin{equation}\label{K}
\widehat{R\partial_R u}=-2\xi\partial_\xi \hat{u}+ \KK\hat{u}.
\end{equation}

With $\KK$ defined as such, we have

\[\FF\Big(\partial_\tau+\frac{\lambda_\tau}{\lambda}R\partial_R\Big)=\Big(\partial_\tau+\frac{\lambda_\tau}{\lambda}(-2\xi\partial_\xi+\KK)\Big)\FF,\]

which yields

\begin{align*}
\FF\Big(\partial_\tau+\frac{\lambda_\tau}{\lambda}R\partial_R\Big)^2&=\Big(\partial_\tau+\frac{\lambda_\tau}{\lambda}(-2\xi\partial_\xi+\KK)\Big)^2\FF\\
                                                                    &=\Big(\partial_\tau-\frac{\lambda_\tau}{\lambda}2\xi\partial_\xi\Big)^2\FF+2\frac{\lambda_\tau}{\lambda}\KK\Big(\partial_\tau-\frac{\lambda_\tau}{\lambda}2\xi\partial_\xi\Big)\FF\\
                                                                    &~+\frac{\lambda_\tau^2}{\lambda^2}\Big(\KK^2-\frac{\nu}{1+\nu}\KK+2[\KK,\xi\partial_\xi]\Big)\FF.
\end{align*}

We can now apply our Fourier transform to (\ref{epeq}) to get (with $x=\mathcal{F}\ep$)

\begin{align}
-\Big(\partial_\tau-2\frac{\lambda_\tau}{\lambda}\xi\partial_\xi\Big)^2&x-\xi x\nonumber\\
                                                                    &=2\frac{\lambda_\tau}{\lambda}\KK\Big(\partial_\tau-2\frac{\lambda_\tau}{\lambda}\xi\partial_\xi\Big)x+\frac{\lambda_\tau^2}{\lambda^2}\Big(\KK^2-\frac{\nu}{1+\nu}\KK+2[\KK,\xi\partial_\xi]\Big)x\nonumber\\
                                                                    &~~-\Big(\frac{1}{4}\Big(\frac{\lambda_\tau}{\lambda}\Big)^2+\frac{1}{2}\partial_\tau\Big(\frac{\lambda_\tau}{\lambda}\Big)\Big)x\nonumber\\
                                                                    &~~+\lambda^{-2}\FF R^{\frac{1}{2}}\Big(N_{2k-1}(R^{-\frac{1}{2}}\FF^{-1}x)+H_1(R^{-\frac{1}{2}}\FF^{-1}x)+e_{2k-1}\Big).\label{xequation}
\end{align}

Our strategy will be to solve this equation for the Fourier coefficient $x$ and use the inverse Fourier transform $\mathcal{F}^{-1}$ to retrieve a solution of (\ref{epeq}). Note that we are interested in solutions of (\ref{xequation}) which decay as $\tau\rightarrow\infty.$ This means that we have to solve the equation backwards in time with zero Cauchy data at $\tau=\infty.$ The next section is devoted to describing the mapping properties of the error operator $\KK$ and the fundamental solution of the "transport like" equation

\begin{equation}\label{transport}
-\Bigg[\Big(\partial_\tau-2\frac{\lambda_\tau}{\lambda}\xi\partial_\xi\Big)^2+\xi\Bigg] x(\tau,\xi)=b(\tau,\xi).
\end{equation}

We call this a "transport like" equation because the characteristic curves of the operator $\partial_\tau-2\frac{\lambda_\tau}{\lambda}\xi\partial_\xi$ are $(\tau,\lambda^{-2}(\tau)\xi)$ (see section 8 of \cite{KST} for more details).

\section{The Fundamental Solution and Estimates in $H^s_\rho$}
In this section we define certain Sobolev spaces associated with the operator $\LL.$ We will also provide the mapping properties of the fundamental solution operator of (\ref{transport}) and the operator $\KK$ relative to these spaces. Finally we demonstrate how to control the norm of the right hand side of (\ref{xequation}). In the next section we will use a contraction mapping argument on these Sobolev spaces to find a solution $\ep$ to (\ref{epeq}) with the right decay properties. The estimates from this section are the necessary ingredients for being able to close that fixed point argument.\\

We begin with the norms. On the frequency side we define the weighted $L^2$ norms

\begin{equation}\label{Ls}
\|f\|_{L^{2,s}_\rho}:=\Big(\int_0^\infty|f(\xi)|^2\langle\xi\rangle^{2s}\rho(\xi)d\xi\Big)^{\frac{1}{2}}.
\end{equation}

For functions of the spacial variable $R$ we define

\begin{align*}
\|u\|_{H^s_\rho}:=\|\hat{u}\|_{L^{2,s}_\rho}.
\end{align*}

The relationship between this norm and the usual Sobolev norm on $\RR^2$ is explained in lemma \ref{lemma10.1} in the appendix. Finally to control the decay in time we introduce the $L^{N,\infty}L^{2,s}_\rho$ spaces with norm

\begin{align*}
\|f\|_{L^{\infty,N}L^{2,s}_\rho}:=\sup_{\tau\geq1}\tau^N\|f\|_{L^{2,s}_\rho}.
\end{align*}

$L^{N,\infty}H^s_\rho$ is defined similarly.\\

We now move to the estimates. A priori we only have

\[\KK:C^\infty_0((0,\infty))\rightarrow C^\infty((0,\infty)).\]

However, with the norms just defined, according to lemma \ref{transference} in the appendix, $\KK$ has the following mapping properties

\begin{align}
&\KK:L^{2,s}_\rho\rightarrow L^{2,s+\frac{1}{2}}_\rho, \label{Kmap}\\
&[\KK,\xi\partial_\xi]:L^{2,s}_\rho\rightarrow L^{2,s}_\rho.\label{Kcommap}
\end{align}

Next let $H$ be the backward fundamental solution of the operator

\begin{equation}\label{fundop}
\Big(\partial_\tau-2\frac{\lambda_\tau}{\lambda}\xi\partial_\xi\Big)^2+\xi,
\end{equation}

and denote its operator kernel by $H(\tau,\sigma).$ By this we mean that suppressing the $\xi$ variable,

\[x(\tau)=-\int_\tau^\infty H(\tau,\sigma)b(\sigma)\]

is a solution of (\ref{transport}). Then according to lemma \ref{fundamentallemma} in the appendix, given $s$ there is constant $C=C(s)$ such that for large enough $N$

\begin{equation}\label{fundamental}
\|Hb\|_{L^{\infty,N-2}L^{2,s+\frac{1}{2}}_\rho}+\Big{\|}\big(\partial_\tau-2\frac{\lambda_\tau}{\lambda}\xi\partial_\xi)Hb\Big{\|}_{L^{\infty,N-1}L^{2,s}_\rho}\leq CN^{-1} \|b\|_{L^{\infty,N}L^{2,s}_\rho}.
\end{equation}

Motivated by this we want to bound the $L^{\infty,N}L^{2,s}_\rho$ norm of the right hand side of (\ref{xequation}) by the $L^{\infty,N-2}L^{2,s+\frac{1}{2}}_\rho$ norm of $x$ and the $L^{\infty,N-1}L_{\rho}^{2,s}$ norm of $(\partial_\tau-2\frac{\lambda_\tau}{\lambda}\xi\partial_\xi)x.$ The terms involving $\KK$ or its commutator can be bounded using (\ref{Kmap}) and (\ref{Kcommap}). The term coming from $e_{2k-1}$ will be dealt with in the next section. Note that this term does not depend on $x$ so we only need to make sure that it belongs to our iteration space (to be specified in the next section). This will be accomplished by taking $k$ large as in \cite{KST}. For the $N_{2k-1}$ term we will use lemma \ref{nonlinearity} from the appendix. Therefore the term with $H_1$ is the only one which is essentially new. The estimate we need is

\begin{equation}\label{Hbound}
\|\lambda^{-2}R^{\frac{1}{2}}H_1(R^{-\frac{1}{2}}\ep)\|_{L^{\infty,N}H^s_\rho}\lesssim\|\ep\|_{L^{\infty,N-2}H^{s+\frac{1}{2}}_\rho}.
\end{equation}

Recall that

\[H_1(v)=\frac{\sin\alpha -\alpha\cos\alpha}{\alpha\sin\alpha}\partial_\alpha v+\frac{\sin^2\alpha-\alpha^2}{\alpha^2\sin^2\alpha}v.\]

We write

\[z(\alpha)=\frac{\sin\alpha-\alpha\cos\alpha}{\alpha^2\sin\alpha},\]

so

\begin{align*}
\lambda^{-2}R^{\frac{1}{2}}H_1(R^{-\frac{1}{2}}\ep)=&\lambda^{-2}\Big(\frac{\sin^2\alpha-\alpha^2}{\alpha^2\sin^2\alpha}\Big)\ep+z(\alpha)\lambda^{-2}\alpha R^{\frac{1}{2}}\lambda\partial_R(R^{-\frac{1}{2}}\ep).
\end{align*}

Since $\lambda^{-2}=c\tau^{-2-\frac{2}{\nu}},$ and according to lemma \ref{lemma9.1}, the first term above can be bounded consistently with (\ref{Hbound}). For the second term note that as long as $\nu\leq1$ we have $\lambda^{-2}\lambda=t^{-\nu+1+2\nu}=c\tau^{-2}t^{1-\nu}\lesssim\tau^{-2},$ which is he desired time gain in (\ref{Hbound}). Note moreover, that since we are only concerned with finding a solution inside the light cone, we may replace $r$ by $q(\frac{R}{\lambda})\frac{R}{\lambda}$ where $q$ is a smooth function supported in $R\leq1.$ Therefore, again revoking lemma \ref{lemma9.1}, for such range of $\nu$ it suffices to bound

\[\|\frac{R^{\frac{1}{2}}q(\frac{R}{\lambda})}{\lambda}R\partial_R(R^{-\frac{1}{2}}\ep)\|_{H^s_\rho}\lesssim\|\ep\|_{H^{2+\frac{1}{2}}_\rho}.\]

But noting that in two dimensions, $R\partial_R=x\partial_x+y\partial_y,$ lemma \ref{lemma10.1} allows us to reduce this to

\[\|\frac{q}{\lambda}(x\partial_x+y\partial_y)f\|_{H^{2s}(\RR^2)}\lesssim\|f\|_{H^{2s+1}(\RR^2)},\]

where $f=R^{-\frac{1}{2}}\ep.$ This holds true because because $\frac{qx}{\lambda}$ and $\frac{qy}{\lambda},$ together with all of their derivative, are bounded (see e.g. \cite{Ta1}).
\section{The Blow Up Construction: Proof of Theorem \ref{BlowUp}}
In this section we combine the results from the previous sections to find $\ep$ in a way that leads to blow up for $u.$ Fix $\nu\in(\frac{1}{2},1]$ and let $u_{2k-1}$ and $e_{2k-1}$ be as in theorem \ref{elliptic modifier}. The index $k$ is chosen sufficiently large depending $\nu$ and in particular on $N$ (as will be specified below). A priori $u_{2k-1}$ and $e_{2k-1}$ are defined only on the cone $\{\alpha\leq t\},$ but we extend them to functions on the double cone $\{\alpha\leq2t\},$ with the only requirement that extensions are of the same smoothness class and all meaningful derivatives agree on the boundary of the cone. With these choices of $\nu,$ $u_{2k-1}$ and $e_{2k-1,}$ and with $s\in(\frac{1}{4},\frac{\nu}{2})$ we will find a solution $\ep$ of (\ref{epeq}) (solved backwards in $\tau$) which satisfies

\begin{equation}\label{epbounds}
\|\ep(\tau)\|_{H^{s+\frac{1}{2}}_\rho}\lesssim\tau^{2-N}, \quad\Big{\|}\Big(\partial_\tau+\frac{\lambda_\tau}{\lambda}R\partial_R\Big)\ep(\tau)\Big{\|}_{H^s_\rho}\lesssim\tau^{1-N}.
\end{equation}

The large exponent $N$ is determined by lemmas \ref{fundamentallemma} and \ref{nonlinearity}. In what follows we identify the Sobolev spaces on the domain sphere (more precisely on a neighborhood of the north pole) with the normal Sobolev spaces on $\RR^2$ via the $(\alpha,\theta)$ (or $(R,\theta)$) coordinates. \\\\

Near the boundary of the cone $\{\alpha=t\},$ the error $e_{2k-1}$ has a singularity of the type $(1-a)^{\nu-\frac{1}{2}}\log^m(1-a),$ which means that locally near the boundary $e_{2k-1}\in H^\beta$ as long as $\beta<\nu.$ Note that under the transformation $T$ of lemma \ref{lemma10.1} this corresponds to $H^{\beta/2}_\rho.$ On the other hand according to theorem \ref{elliptic modifier}

\[t^2e_{2k-1}\in\sum_{j=0}^{k-1}\frac{1}{(t^\nu\lambda)^{2j}(t\lambda)^{2(k-j)}}\soooop.\]

Near $R=0$ this means that we have an expansion of the form (with $T$ as in lemma \ref{lemma10.1})

\[T(R^{\frac{1}{2}}e_{2k-1})=e^{i\theta}R(c_0(\tau)+c_1(\tau)R^2+c_2(\tau)R^4+\cdots),\]

which is smooth around $R=0.$ Finally we consider the size of the error for large $R$

\[e_{2k-1}=\bigo\Big(\frac{R(\log(2+R))^{2k-1}}{t^{-2k+2}t^2(t\lambda)^2}\Big).\]

It follows that the $L^2$ norm of $T(\lambda^{-2}R^{\frac{1}{2}}e_{2k-1})$ is bounded by $C\tau^2t^{2k-2}$ (note that we are taking the $L^2$ norm only on the light cone which corresponds to $R\lesssim\tau$). Taking $k$ so large that

\[\sup_{\tau\geq1}\tau^{N+2}t^{2k-2}<\infty,\]

and noting that due to the expansion of $e_{2k-1}$ for large $R$ taking $R$ derivatives reduces the size for large $R,$ the arguments above show that

\begin{equation}\label{errorcontrol}
\|\lambda^{-2}R^{\frac{1}{2}}e_{2k-1}\|_{L^{\infty,N}H^s_\rho}\lesssim1,
\end{equation}

as long as $s<\frac{\nu}{2}.$ Now using the distorted Fourier transform $\mathcal{F}$ we recast (\ref{epeq}) as (\ref{xequation}) with $x=\mathcal{F}\ep.$ According to lemmas \ref{transference}, \ref{fundamentallemma}, \ref{nonlinearity}, \ref{lemma9.1}, and (\ref{Hbound}) and (\ref{errorcontrol}) we can solve (\ref{xequation}) using a contraction mapping argument with respect to the norm

\[\|x\|_{L^{\infty,N-2}L^{2,s+\frac{1}{2}}_\rho}+\Big{\|}\Big(\partial_\tau-2\frac{\lambda_\tau}{\lambda}\xi\partial_\xi\Big)x\Big{\|}_{L^{\infty,N-1}L^{2,s}_\rho}.\]

Using the inverse of the distorted Fourier transform and lemma \ref{transference} we can derive a solution $\ep$ of (\ref{epeq}) which satisfies (\ref{epbounds}).\\\\

To go from the longitudinal variable $u$ to a full co-rotational wave map in terms of the ambient coordinates of $\RR^3\supseteq S^2,$ observe that these coordinates are given by $\phi\circ T(u),$ where $\phi:\RR^2\rightarrow S^2\subseteq\RR^3$ is given by

\begin{align*}
\phi(e^{i\theta}v)=(\cos v,\sin v \cos\theta,\sin v \sin\theta).
\end{align*}

It is then verified that $\phi\circ T(u)\in H^{2s+1},$ interpreted componentwise. We have thus constructed a wave map on the closure of the cone $\{\alpha\leq t,~0<t\leq t_0\},$ which is of class $H^{1+\nu-}.$ To extend this to a wave map on all of $S^2\times \RR,$ extend the initial data $\partial_tu(t_0,\cdot),~u(t_0,\cdot)$ at time $t=t_0$ to all of $S^2$ in the same smoothness and equivariance class, and in such a way that a large neighborhood of the south pole ($\alpha=\pi$) is mapped to the south pole on the target. Let $\tilde{u}$ be the corresponding wave map. We claim that $\tilde{u}$ does not develop singularities in $(0,t_0]\times S^2.$ Indeed due to the equivariance assumption the first singularity has to develop at one of the poles. At the north pole $\alpha=0$ this is precluded by our construction because finite speed of propagation implies that $\tilde{u}$ agrees with $u$ on the light cone $\{\alpha\leq t,~0<t\leq t_0\}.$ At the south pole this is precluded by the small energy result in \cite{ST} which implies that singularity development results in energy concentration, which in turn is ruled out by the constancy of the initial data near this pole and finite speed of propagation. This completes the proof of theorem \ref{BlowUp}. 
\section{Appendix}
%
%
In this section we provide the statements, without proofs, of some of the results from \cite{KST} which are used in the current work.

\begin{lemma}\label{lemma3.7}
Let $k\geq 1,$ and let $L$ be defined as in (\ref{Lelliptic}). Then the solution $v$ to the equation
\[Lv=f\in S^1(R^{-1}(\log R)^{2k-2}),\quad v(0)=v^\p(0)=0\]
has regularity
\[v\in S^3(R(\log R)^{2k-1}).\]
\end{lemma}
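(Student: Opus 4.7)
The plan is to represent $v$ via an explicit Volterra-type integral against a fundamental system of the homogeneous equation $Lw=0$, and then to estimate the resulting weighted norms term by term. First I would produce two linearly independent solutions $w_1,w_2$ of $Lw=0$ together with their Wronskian $W$, and use variation of parameters to write
\[v(R)\;=\;w_2(R)\int_0^R\frac{w_1(s)f(s)}{W(s)}\,ds\;-\;w_1(R)\int_0^R\frac{w_2(s)f(s)}{W(s)}\,ds,\]
the lower limit $0$ encoding the initial conditions $v(0)=v'(0)=0$. The precise form of $L$ in \eqref{Lelliptic} dictates the leading behaviour of $w_1,w_2$ at the origin and at large $R$, and this is what converts the weighted hypothesis on $f$ into a weighted bound on $v$.

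Next I would substitute $f\in S^1(R^{-1}(\log R)^{2k-2})$ into the representation and bound each factor in the target weight. Each of the two integrations contributes a factor of $R$, which accounts for the net shift of the weight from $R^{-1}$ to $R$; the additional logarithmic gain comes from the elementary identity
\[\int^{R} s^{-1}(\log s)^{2k-2}\,ds\;=\;\frac{(\log R)^{2k-1}}{2k-1}+O(1),\]
which is exactly what raises the exponent $2k-2$ to $2k-1$. Gaining two orders of regularity, from $S^1$ to $S^3$, is then automatic from differentiating the integral representation: applying $L$ reproduces $f$ directly, so two derivatives transfer by the equation itself, while tangential and intermediate derivatives land on the kernel and inherit bounds from the homogeneous solutions $w_1,w_2$.

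The main obstacle will be to carry out these estimates componentwise in the $S^j$ norms, since those spaces encode a whole family of weighted derivatives rather than a single one. One has to keep careful track of which derivatives cancel an integration against $s$ and which fall on the fundamental solutions, verify integrability at $R=0$ against the vanishing initial data, and ensure that no spurious powers of $\log R$ appear in the intermediate steps (in particular that derivatives of $w_1,w_2$ do not consume the logarithmic gain). Once this bookkeeping is under control, the inclusion $v\in S^3(R(\log R)^{2k-1})$ follows.
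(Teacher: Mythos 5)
The paper offers no argument of its own for this lemma---it simply cites Lemma 3.7 of \cite{KST}---and your variation-of-parameters strategy (explicit fundamental system, here essentially $\frac{R}{1+R^2}$ together with its Wronskian partner which carries a logarithm, Duhamel integration from $R=0$ encoding $v(0)=v^\p(0)=0$, and the gain from $\int^R s^{-1}(\log s)^{2k-2}\,ds \sim (2k-1)^{-1}(\log R)^{2k-1}$) is exactly how that lemma is proved there. One small caution: the superscript in $S^3$ records the order of vanishing and the parity of the expansion at $R=0$, not two additional derivatives, so the passage from $S^1$ to $S^3$ should be justified by the double integration near the origin together with the odd expansion of $f$, rather than by ``two derivatives transferring through the equation''; with that reading your bookkeeping matches the cited proof.
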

\begin{proof}
Lemma 3.7, p. 560 in \cite{KST}.
\end{proof}

\begin{lemma}\label{lemma3.8}
Let

\[v\in\frac{1}{(t\lambda)^2}IS^3(R\log R,Q).\]

Then

\[\sin v\in\frac{1}{(t\lambda)^2}IS^3(R\log R,Q),\quad \cos v\in IS^0(1,Q).\]

\end{lemma}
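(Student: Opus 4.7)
The natural approach is to expand $\sin v$ and $\cos v$ in their Taylor series about $0$ and sum the resulting series in $IS^3$ (respectively $IS^0$), exploiting the smallness of the prefactor $(t\lambda)^{-2}$. The first ingredient is a multiplicative closure property of these symbol spaces: for $f_i \in IS^{k_i}(R^{a_i}(\log R)^{b_i},Q)$ one should have
\[
f_1 f_2 \in IS^{\min(k_1,k_2)}\bigl(R^{a_1+a_2}(\log R)^{b_1+b_2},\,Q\bigr),
\]
a Leibniz-type statement following directly from the pointwise symbol bounds that define $IS^k(\cdot,Q)$. Iterating, one obtains
\[
v^n \in \frac{1}{(t\lambda)^{2n}}\, IS^3\bigl(R^n(\log R)^n,\,Q\bigr).
\]

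In the self-similar region where these spaces are used one has $R\log R/(t\lambda)^2 \lesssim 1$, with a genuine geometric gain that makes higher powers of $v$ small. Consequently the series
\[
\sin v = \sum_{j\geq 0}\frac{(-1)^j}{(2j+1)!}\,v^{2j+1},\qquad \cos v = 1+\sum_{j\geq 1}\frac{(-1)^j}{(2j)!}\,v^{2j}
\]
converge absolutely. For $\sin v$, the $j=0$ term already sits in $\frac{1}{(t\lambda)^2}IS^3(R\log R,Q)$, and the higher terms carry an additional factor $\bigl(R\log R/(t\lambda)^2\bigr)^{2j}$ whose smallness combined with the factorial $(2j+1)!$ makes them summable back into the same target space. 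For $\cos v$, the constant term $1$ lies in $IS^0(1,Q)$ (which is why the conclusion is phrased with weight $1$ and index $0$), while the corrections for $j\geq 1$ all carry at least a factor of $v^2 \in (t\lambda)^{-4}IS^3((R\log R)^2,Q)$, strictly better than $IS^0(1,Q)$ and easily absorbed into it.

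The main technical obstacle is upgrading absolute convergence from pointwise or $L^\infty$ statements to convergence in the $IS^3$ (respectively $IS^0$) semi-norms, which control a fixed number of vector-field derivatives in $R$ (and in the $Q$-parameter). Differentiating $v^n$ by Leibniz produces only polynomially many (in $n$) terms whose symbol sizes are governed by the product rule above, and this polynomial growth is comfortably dominated by the Taylor factorials. Tracking the $Q$-dependence through composition with the analytic functions $\sin$ and $\cos$ is essentially a Faà di Bruno bookkeeping coupled with the algebra property; once this is in place, the two claims follow term-by-term from the estimates already assembled.
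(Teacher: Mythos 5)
Your overall strategy --- Taylor-expand $\sin$ and $\cos$, use the multiplicative structure of the $IS$-spaces, and sum the series using the bound $R\log R/(t\lambda)^2\lesssim \log(t\lambda)/(t\lambda)\lesssim 1$ valid for $R\lesssim t\lambda$ inside the cone --- is exactly the mechanism behind the result; the paper itself gives no argument beyond the citation to pp.\ 561--562 of \cite{KST}, where essentially this power-series argument is carried out. However, your ``first ingredient'' is stated incorrectly, and the error concerns precisely the piece of structure that the lemma is about. Membership in $IS^{k}(R^a(\log R)^b,Q)$ records not merely vanishing to order $k$ at $R=0$ but an expansion in the powers $R^{k+2j}$ (odd expansion for odd $k$, even for even $k$). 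Under multiplication the indices therefore \emph{add}, $f_1f_2\in IS^{k_1+k_2}(R^{a_1+a_2}(\log R)^{b_1+b_2},Q)$, and one has $IS^{k+2}\subset IS^{k}$ but $IS^{k+1}\not\subset IS^{k}$; the rule $f_1f_2\in IS^{\min(k_1,k_2)}$ is false (the product of two elements of $IS^3$ starts at $R^6$ with an even expansion, so it is not in $IS^3$). Consequently your intermediate claims ``$v^n\in (t\lambda)^{-2n}IS^3(R^n(\log R)^n,Q)$'' (false for even $n$) and ``$v^2\in(t\lambda)^{-4}IS^3\bigl((R\log R)^2,Q\bigr)$, strictly better than $IS^0(1,Q)$'' (it lies in $IS^6$, and $IS^3\not\subset IS^0$) do not survive as written.

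The repair is immediate, and it is exactly what produces the dichotomy in the statement: with the additive rule, $v^{2j+1}\in(t\lambda)^{-2(2j+1)}IS^{6j+3}\bigl(R^{2j+1}(\log R)^{2j+1},Q\bigr)\subset(t\lambda)^{-2}IS^{3}(R\log R,Q)$ after absorbing the factors $\bigl(R\log R/(t\lambda)^2\bigr)^{2j}\lesssim 1$, since $6j+3$ is odd and $\geq 3$; while $v^{2j}\in IS^{6j}\bigl((R\log R)^{2j},Q\bigr)\subset IS^0(1,Q)$ since $6j$ is even. This parity sorting is the reason the odd series $\sin v$ stays in $(t\lambda)^{-2}IS^3(R\log R,Q)$ whereas the even series $\cos v$ lands in $IS^0(1,Q)$, a point your $\min$-rule obscures even though your sin/cos split implicitly relies on it. Your remaining points --- convergence in each symbol seminorm (each seminorm involves finitely many derivatives, so Leibniz produces polynomially many terms with at worst geometric constants, dominated by the Taylor factorials) and the Fa\`a di Bruno bookkeeping in the $Q$-variables --- are sound, so with the corrected algebra property your argument coincides with the proof cited from \cite{KST}.
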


\begin{proof}
Lemma 3.8, pp. 561-562 in \cite{KST}.
\end{proof}

\begin{lemma}\label{lemma 3.9}
Let $f$ be an analytic function in $[0,1)$ with an odd expansion at $0,$ and let $\beta>\frac{1}{2}$ be a fixed real number. Then there is a unique solution $w$ to the equation

\[L_\beta w=f,\quad w(0)=A,\quad \partial_a w(0)=B,\]

with

\[L_\beta = (1-a)^2\partial^2_a+(a^{-1}+2a\beta-2a)\partial_a+(-\beta^2+\beta-a^2),\]

such that

\begin{description}

\item[] (i) if $A=B=0,$ then w is analytic in $[0,1)$ with an odd cubic expansion at 0.\\
\item[] (ii) \begin{align*}
             w(a)= &c_1\phi_1(a)+c_2\phi_2(a)\\
             &-(\beta+1/2)^{-1}\phi_1(a)\int_a^1\phi_2(a^\p)q_1(a^\p)f(a^\p)da^\p\\
             &-(\beta+1/2)^{-1}\phi_2(a)\int_{a_0}^a\phi_1(a^\p)q_1(a^\p)f(a^\p)da^\p.
             \end{align*}
\end{description}
Here $a_0<1$ is some number close to one, and $c_1$ and $c_2$ are constants determined by the initial values. $q_1$ is given by

\[q_1(a)=(1-a)^{-\beta-\frac{1}{2}}[1+(1-a)\tilde{q_2}(a)]\]

with $\tilde{q_2}$ analytic near $a=1.$ Moreover we can find constant $\mu_l$ and $\tilde{\mu_l}$ such that

\[\phi_1(a)=1+\sum_{l=1}^\infty\mu_l(1-a)^l,\quad \phi_2(a)=(1-a)^{\beta+\frac{1}{2}}\Big[1+\sum_{l=1}^\infty\tilde{\mu_l}(1-a)^l\Big],\]

if $\beta-\frac{1}{2}\notin \ZZ^{+}.$ If $\beta-\frac{1}{2}\in\ZZ^{+}$ then we need to modify $\phi_1$ to

\[\phi_1(a)=1+\sum_{l=1}^\infty\mu_l(1-a)^l+c\phi_2(a)\log(1-a),\]

for some constant $c$ depending on $\beta.$
\end{lemma}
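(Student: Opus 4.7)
The plan is to treat the two singular endpoints $a=0$ and $a=1$ by Frobenius analysis and then glue the local information via variation of parameters.

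First, near $a=0$ the leading behavior of $L_\beta$ is $\partial_a^2+a^{-1}\partial_a+\beta-\beta^2$, a regular singular point with doubled indicial root $r=0$; one fundamental mode is analytic, the other carries a $\log a$. Because the coefficient $a^{-1}+2a\beta-2a$ is odd and $-\beta^2+\beta-a^2$ is even in $a$, the operator $L_\beta$ preserves parity, so with $A=B=0$ and $f$ odd the unique analytic solution is forced to be odd. Computing $L_\beta(a^3)$ at leading order and matching $f=f_0 a+O(a^3)$ then pins the cubic coefficient and proves (i).

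Next, near $a=1$ I would set $b=1-a$ and run Frobenius, obtaining the indicial roots $0$ and $\beta+\tfrac12$. The $r=0$ root yields $\phi_1=1+\sum_{l\ge 1}\mu_l(1-a)^l$ via an explicit recursion for the $\mu_l$; the $r=\beta+\tfrac12$ root yields $\phi_2=(1-a)^{\beta+1/2}\bigl[1+\sum_{l\ge 1}\tilde\mu_l(1-a)^l\bigr]$. When $\beta-\tfrac12\in\ZZ^+$ the roots differ by a positive integer and the recursion for $\phi_1$ hits a resonant obstruction at the corresponding level, which I repair in the standard Frobenius way by adding $c\,\phi_2\log(1-a)$ to $\phi_1$ with $c$ tuned precisely to cancel the obstruction. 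Convergence of these series on a neighborhood of $a=1$ comes from standard regular-singular-point estimates, and propagation across $(0,1)$ uses that the equation has no further singular points on the interior.

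Third, I would apply Abel's identity to compute $W[\phi_1,\phi_2]$ up to a multiplicative constant, pinning that constant by matching the leading $(1-a)^{\beta-1/2}$ behavior of $\phi_1\phi_2'-\phi_1'\phi_2$ near $a=1$; standard variation of parameters then identifies the kernel $q_1(a)=(1-a)^{-\beta-1/2}[1+(1-a)\tilde q_2(a)]$ with $\tilde q_2$ analytic near $a=1$, and produces the prefactor $(\beta+\tfrac12)^{-1}$. Choosing the upper limit of the first integral at $1$ (for the right decay) and the lower limit of the second at an interior point $a_0$ (to avoid the singularity) gives the representation in (ii); the constants $c_1,c_2$ are then pinned by $w(0)=A$, $\partial_a w(0)=B$ using the values of $\phi_1,\phi_2,\phi_1',\phi_2'$ at $a=0$ obtained by transporting the Frobenius series from $a=1$ across the regular interior.

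The hardest step will be the resonant case $\beta-\tfrac12\in\ZZ^+$: one must correctly compute the log coefficient $c$, verify that the modified $\phi_1$ remains an exact solution with the prescribed leading value at $a=1$, and check that the variation-of-parameters assembly in (ii) still behaves correctly when $\phi_1$ carries the log. A secondary technicality is the transport of the Frobenius expansions at $a=1$ to $a=0$ for the initial-condition matching, which requires uniform estimates on compact subintervals of $(0,1)$.
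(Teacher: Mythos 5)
Your outline is, in substance, the proof of this lemma: the paper itself gives no independent argument but simply cites Lemma 3.9 of \cite{KST} (with (ii) being their equation (3.31) and the expansions of $\phi_1,\phi_2,q_1$ their (3.27), (3.30)), and the KST proof is exactly the route you describe — power-series/parity analysis at the regular singular point $a=0$ for part (i), Frobenius theory at $a=1$ with indicial roots $0$ and $\beta+\tfrac12$ (including the logarithmic correction to $\phi_1$ when the roots differ by an integer), and variation of parameters with the Wronskian normalization producing the factor $(\beta+\tfrac12)^{-1}$ and the kernel $q_1$. Your handling of the endpoints of integration ($1$ for the $\phi_2$-integral since $\phi_2 q_1$ is bounded there, an interior $a_0$ for the $\phi_1$-integral since $q_1\sim(1-a)^{-\beta-\frac12}$ is not integrable at $1$) is also the right mechanism.

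One concrete point you must address: your two structural inputs are stated for an operator whose second-order coefficient is even in $a$ and vanishes to first order at $a=1$, i.e.\ for $1-a^2$, not for the coefficient $(1-a)^2$ as printed in the statement. With $(1-a)^2$ your parity argument at $a=0$ fails (that coefficient is not even), and, since the first-order coefficient equals $2\beta-1\neq0$ at $a=1$, the point $a=1$ becomes an irregular singular point, so the Frobenius exponents $0$ and $\beta+\tfrac12$ and the form of $\phi_2$ and $q_1$ would all be wrong. The lemma's own conclusions force the reading $1-a^2$ (which is the operator in \cite{KST}), so your argument goes through for the intended operator, but you should say explicitly that you are using this form and verify the evenness of the leading coefficient when you invoke parity. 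Two smaller items to make explicit: the ``cubic'' onset in (i) comes from combining oddness with the conditions $A=B=0$ (which kill the linear term), and uniqueness near $a=0$ uses that the second homogeneous solution carries a $\log a$ and so cannot attain finite data at $0$; both are one-line additions to your sketch.
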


\begin{proof}
All statements come from lemma 3.9 in \cite{KST}. Part (i) corresponds to part (i) there.  Part (ii) comes from equation (3.31) in the proof of lemma 3.9 in \cite{KST}. The expressions for $\phi_j$ are given by (3.27) and (3.30), and for $q_1$ at the bottom of page 567, in \cite{KST}.
\end{proof}

\begin{lemma}\label{spectrallemma}
Let $\LL$ be the self adjoint operator $-\partial_R^2+\frac{3}{4R^2}-\frac{8}{(1+R^2)^2}$ on $L^2((0,\infty),dR)$ with domain

\[\mathrm{Dom}(\LL)=\{f\in L^2((0,\infty))|f,f^\p\in AC_{loc}((0.\infty)),\LL_0f\in L^2((0,\infty))\},\]

where $\LL_0:=-\partial_R^2+\frac{2}{4R^2}.$ Then:

\begin{description}
\item[]  $~~$ (i) The spectrum of $\LL$ is purely absolutely continuous and equals $spec(\LL)=[0,\infty).$\\

\item[]  $~~$ (ii) For each $z\in\CC$ there exists a fundamental system $\phi(R,z),~\theta(R,z)$ for $\LL-z$ which is analytic in $z$ for each $R>0$ and has the asymptotic behavior

    \[\phi(R,z)\sim R^{\frac{3}{2}},\quad \theta(R,z)\sim \frac{1}{2}R^{-\frac{1}{2}}\quad \mathrm{as}~R\rightarrow 0.\]

    In particular, their Wronksian is $W(\theta(.,z),\phi(.,z))=1$ for all $z\in\CC.$ By convention $\theta(R,z)$ and $\phi(R,z)$ are real valued for $z\in\RR.$ $\phi$ is the Weyl-Titchmarsh solution of $\LL-z$ at $R=0,$ and admits an absolutely convergent asymptotic expansion

    \[\phi(R,z)=\phi_0(R)+R^{-\frac{1}{2}}\sum_{j=1}^{\infty}(R^2z)^j\phi_j(R^2)\]

    where

    \[\phi_0(R)=\frac{R^{\frac{3}{2}}}{1+R^2},\]

    and for $j\geq 1$ the functions $\phi_j$ are holomorphic in $U=\{\mathrm{Re}~u>-\frac{1}{2}\}$ and satisfy the bounds

    \[|\phi_j(u)|\leq\frac{3C^j}{(j-1)!}\log(1+|u|).\]
    In particular $\phi_j(0)=0$ and $|\phi^\p_j(0)|\leq\frac{3C^j}{(j-1)!}$ for all $j\geq1.$\\

\item[]  $~~$ (iii) For each $z\in \CC,$ $\mathrm{Im}~z>0,$ let $\psi^+(R,z)$ denote the Weyl-Titchmarsh solution of $\LL-z$ at $R=\infty$ normalized so that

    \[\psi^+(R,z)\sim z^{-\frac{1}{4}}e^{iz^{\frac{1}{2}}R}\quad\mathrm{as}~R\rightarrow \infty,\quad\mathrm{Im}~z^{\frac{1}{2}} >0.\]

      If $\xi>0,$ then the limit $\psi^+(R,\xi+i0)$ exists point-wise for all $R>0$ and we denote it by $\psi^+(R,\xi).$ Moreover, define $\psi^-(.,\xi):=\overline{\psi^+(.,\xi)}.$ Then $\psi^+(R,\xi),~\psi^-(R,\xi)$ form a fundamental system of $\LL-\xi$ with asymptotic behavior $\psi^{\pm}(R,\xi)\sim \xi^{-\frac{1}{4}}e^{\pm i\xi^{\frac{1}{2}}R}$ as $R\rightarrow\infty.$ More precisely, for and $\xi>0$ we can write

      \[\psi^+(R,\xi)=\xi^{-\frac{1}{4}}e^{iR\xi^{\frac{1}{2}}}\sigma(R\xi^{\frac{1}{2}},R),\quad R^2\xi\gtrsim1\]

      where $\sigma$ admits the asymptotic series approximation

      \begin{align*}
      \sigma(q,R)\approx \sum_{j=0}^{\infty}q^{-j}\psi^+_j(R),\quad \psi_0^+=1
      \end{align*}

      with zero order symbols $\psi^+_j$ that are analytic at infinity,

      \[\sup_{R>0}|(R\partial_R)^k\psi^+_j(R)|<\infty\]

      in the sense that for all large integers $j_0,$ and all indices $\alpha,\beta$ we have

      \[\sup_{R>0}\Big|(R\partial_R)^\alpha(q\partial_q)^\beta\Big[\sigma(q,R)-\sum_{j=0}^{j_0}q^{-j}\psi_j^+(R)\Big]\Big|\leq c_{\alpha,\beta,j_0}q^{-j_0-1}\]

      for all $q>1.$\\

\item[]  $~~$ (iv) The spectral measure $\rho(\xi)d\xi$ of $\LL$ is absolutely continuous and its density is given by

\[\rho(\xi)=\frac{1}{\pi}\mathrm{Im}~m(\xi+i0)\chi_{[\xi>0]}\]

with the "generalized Weyl-Titchmarsh" function

\[m(z)=\frac{W(\theta(.,z),\psi^+(.,z))}{W(\psi^+(.,z),\phi(.,z))},\quad\mathrm{Im}~z\geq0.\]

Moreover $\rho$ satisfies \footnote{$a\asymp b$ means that for some constant $C$ one has $C^{-1}a<b<Ca.$}

\begin{align*}
\rho(\xi)\asymp
\left\{ \begin{array}{l}
\frac{1}{\xi(\log \xi)^2} \quad\quad\quad~ \xi\ll1\\
\xi\quad\quad\quad\quad\quad\quad  \xi\gtrsim 1,
\end{array} \right.
\end{align*}

and we can write

\begin{equation}\label{rhorep}
\rho(\xi)=\frac{1}{\pi}|a(\xi)|^{-2}
\end{equation}

where $a$ satisfies the symbol type bounds

\[|(\xi\partial_\xi)^ka(\xi)|\leq c_k|a(\xi)|\quad \forall~\xi>0.\]
\item[]  $~~$ (v) The distorted Fourier transform defined as

\[\FF:f\mapsto \hat{f}(\xi)=\lim_{b\rightarrow\infty}\int_0^b\phi(R,\xi)f(R)dR\]

is a unitary operator from $L^2(\RR^+)$ to $L^2(\RR^+,\rho),$ and its inverse is given by

\[\FF^{-1}:\hat{f}\mapsto f(r)=\lim_{\mu\rightarrow\infty}\int_0^\mu\phi(R,\xi)\hat{f}(\xi)\rho(\xi)d\xi.\]

Here $\lim$ refers to the $L^2(\RR^+,\rho),$ respectively the $L^2(\RR^+),$ limit.
\end{description}
\end{lemma}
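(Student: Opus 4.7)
The plan is to follow the Weyl--Titchmarsh/Kodaira framework adapted to the inverse-square singularity at $R=0$ and the short-range perturbation at infinity. First I would verify self-adjointness on the stated domain: the indicial exponents for the model operator $-\partial_R^2+\frac{3}{4R^2}$ are $3/2$ and $-1/2$, and since the coefficient $3/4$ sits at the limit-point threshold, only the solution with leading behavior $R^{3/2}$ is square-integrable near zero; at $R=\infty$ the potential is integrable, so one is in the standard short-range setting. This makes $\LL$ essentially self-adjoint and singles out $\phi$ as the distinguished Weyl--Titchmarsh solution at $0$.

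For part (ii), I would first check by direct computation that $\phi_0(R)=R^{3/2}/(1+R^2)$ annihilates $\LL$, produce a second linearly independent zero-energy solution $\tilde\phi_0$ with Wronskian $1$ (which exhibits logarithmic growth at infinity), and then set up the Volterra equation
\[ \phi(R,z)=\phi_0(R)+z\int_0^R\bigl[\phi_0(R)\tilde\phi_0(R')-\tilde\phi_0(R)\phi_0(R')\bigr]\phi(R',z)\,dR'. \]
Iterating in $z$ and regrouping powers of $R^2 z$ yields the stated expansion; the analytic content is the inductive bound $|\phi_j(u)|\leq 3C^j(j-1)!^{-1}\log(1+|u|)$, where the logarithm is inherited from $\tilde\phi_0$ and the $(j-1)!$ from telescoping the nested integrals. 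The Frobenius branch $\theta$ is obtained by the same procedure, starting instead from the solution singular like $\tfrac12 R^{-1/2}$ and renormalizing the Wronskian.

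Part (iii) is the mirror construction at infinity. Writing $\LL-z=-(\partial_R-iz^{1/2})(\partial_R+iz^{1/2})+V$ with $V=\tfrac{3}{4R^2}-\tfrac{8}{(1+R^2)^2}$, I would substitute the ansatz $\psi^+(R,z)=z^{-1/4}e^{iz^{1/2}R}\sigma(Rz^{1/2},R)$ into the eigenvalue equation, derive a linear transport equation for $\sigma$ in the WKB variable $q=Rz^{1/2}$, and solve it by a Volterra iteration from $R=\infty$. Each iteration gains a factor $q^{-1}$, producing the asymptotic expansion with zero-order symbols $\psi_j^+$ analytic at infinity; differentiation under the integral delivers the symbol bounds on $(R\partial_R)^\alpha(q\partial_q)^\beta$ of the remainder. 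Existence of the boundary value $\psi^+(R,\xi+i0)$ for $\xi>0$ is standard because $V$ decays faster than $R^{-1}$.

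For part (iv), I would plug the two fundamental systems into the classical formula $\rho(\xi)=\pi^{-1}\mathrm{Im}\,m(\xi+i0)$ using $m(z)=W(\theta,\psi^+)/W(\psi^+,\phi)$. The large-$\xi$ behavior $\rho(\xi)\asymp\xi$ follows by inserting the WKB form of $\psi^+$ against the leading $R^{3/2}$ behavior of $\phi$, giving a denominator Wronskian of order $z^{-1/4}$. The small-$\xi$ behavior $\rho(\xi)\asymp 1/(\xi\log^2\xi)$ is the delicate point: since $\phi_0$ sits on the borderline of square-integrability at infinity, $\LL$ carries a threshold resonance, and a careful matching of $\phi$ (whose $z\to 0$ limit contains a $\log R$ correction coming from $\tilde\phi_0$) with $\psi^+$ near the threshold shows that $m(\xi)\sim c/\log\xi$ with real and imaginary parts of comparable magnitude, producing the squared logarithm. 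The symbol-type bounds on $a(\xi)$ in the representation $\rho=\pi^{-1}|a|^{-2}$ then come from holomorphy of $\psi^+$ in $z$ off the spectrum and explicit differentiation. I expect the threshold-resonance asymptotic of $m$ to be the main technical obstacle; once the density is identified, part (v) is a direct consequence of the Weyl--Kodaira spectral theorem.
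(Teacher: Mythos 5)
The paper offers no proof of its own for this lemma: it is imported verbatim from Krieger--Schlag--Tataru (their Lemma 5.2, Theorem 5.3, Lemma 5.4, Propositions 5.6 and 5.7), and the cited source carries out exactly the Weyl--Titchmarsh/Volterra program you describe --- distinguished solution at the regular singular point $R=0$, WKB-type oscillatory solution at infinity with symbol expansion in $q=R\xi^{1/2}$, spectral density via $\mathrm{Im}\,m(\xi+i0)$, and the Weyl--Kodaira theorem for unitarity. So in terms of method your sketch is aligned with the paper's source.

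One point is off, though it does not break the plan. The second zero-energy solution obtained by reduction of order is
\[
\tilde\phi_0(R)=\phi_0(R)\int^R\frac{dR'}{\phi_0(R')^2}
=\frac{R^{3/2}}{1+R^2}\Bigl[-\tfrac{1}{2R^2}+2\log R+\tfrac{R^2}{2}+C\Bigr],
\]
which grows like $\tfrac12 R^{3/2}$ as $R\to\infty$, not logarithmically; the $\log R$ is only a subleading $R^{-1/2}\log R$ correction. The logarithm that governs the small-$\xi$ asymptotics of $\rho$ actually enters through the Volterra iteration via
\[
\int_0^R\phi_0(R')^2\,dR'\sim\log R,
\]
i.e.\ from the borderline non-square-integrability of $\phi_0\sim R^{-1/2}$ at infinity (the threshold resonance), which multiplies $\tilde\phi_0$ in the first-order coefficient $\phi_1$ and produces $\phi_1\sim\tfrac12 R^{3/2}\log R$. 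Your overall diagnosis --- threshold resonance, matching of $\phi$ and $\psi^{+}$ at $R\sim\xi^{-1/2}$, $m(\xi)\sim c/\log\xi$, hence $\rho\asymp 1/(\xi(\log\xi)^2)$ --- is correct; only the attribution of the $\log$ to growth of $\tilde\phi_0$ should be fixed.
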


\begin{proof}
Unless otherwise stated, "lemma x," "proposition x" or "theorem x" refer to the corresponding lemma, proposition or theorem in \cite{KST}.  Part (i) is lemma 5.2. Part (ii) is part (a) of theorem 5.3. For the asymptotic behavior of $\phi$ see lemma 5.4. Part (iii) comes from part (b) of theorem 5.3 and proposition 5.6. (iv) comes from (c) of theorem 5.3 and proposition 5.7. (v) is (d) of theorem 5.3. Also see \cite{G}, and in particular section 3 and example 3.10 there, for a general analysis of Schrodinger operators with a singular potential similar to $\LL.$
\end{proof}

\begin{lemma}\label{transference}
The operator $\KK$ from (\ref{K}) maps

\[\KK:L^{2,s}_\rho\rightarrow L^{2,s+\frac{1}{2}}_\rho.\]

In addition we have the commutator bound

\[[\KK,\xi\partial_\xi]:L^{2,s}_\rho\rightarrow L^{2,s}_\rho.\]

Both statements hold for all $s \in \RR.$
\end{lemma}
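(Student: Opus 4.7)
The plan is to represent $\KK$ as an integral operator with kernel $K(\xi,\eta)$ in the spectral variables and then reduce both statements to weighted $L^2$ bounds on this kernel. Using the distorted Fourier transform from Lemma~\ref{spectrallemma}(v), I would write
\[
(\KK f)(\xi) = \int_0^\infty K(\xi,\eta)\, f(\eta)\,\rho(\eta)\,d\eta,
\]
where $K(\xi,\eta)$ is computed from the action of $\KK$ in physical space on the generalized eigenfunctions $\phi(R,\xi)$. I would then split $(0,\infty)\times(0,\infty)$ into the four regions determined by $\xi \ll 1$ versus $\xi \gtrsim 1$ and similarly for $\eta$, and on each region use the information from Lemma~\ref{spectrallemma}: the convergent expansion of $\phi$ from part (ii) when the frequency is small, and the WKB representation $\psi^+(R,\xi)=\xi^{-1/4}e^{iR\xi^{1/2}}\sigma(R\xi^{1/2},R)$ from part (iii) when it is large.

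Next, I would derive pointwise bounds on $K(\xi,\eta)$ that reflect the claimed half-derivative gain together with off-diagonal decay. In the high--high region the oscillatory factors $e^{\pm iR\xi^{1/2}}$ together with repeated integration by parts in $R$ give rapid decay away from the diagonal $\xi=\eta$, while on the diagonal the zero-order symbol bounds on $\sigma$ in (iii) produce the claimed smoothing. In the low--low region the logarithmic behaviour of $\rho$ has to be tracked carefully, but the uniform bound $|\phi_j(u)|\leq \tfrac{3C^j}{(j-1)!}\log(1+|u|)$ from (ii) makes the series for $\phi$ absolutely convergent and integrable term by term against $\rho$. The two mixed regions are non-resonant, and integration by parts in $R$ gives essentially no diagonal contribution there.

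With these kernel estimates in hand, the bound $\KK : L^{2,s}_\rho \to L^{2,s+1/2}_\rho$ follows from Schur's test applied to the modified kernel $\langle\xi\rangle^{s+1/2} K(\xi,\eta)\,\langle\eta\rangle^{-s}$ against $\rho(\eta)\,d\eta$. The commutator $[\KK,\xi\partial_\xi]$ has kernel obtained by applying the first-order operator $\xi\partial_\xi+\eta\partial_\eta$ to $K(\xi,\eta)$, and because $\phi(R,\cdot)$ is analytic in $\xi$ with WKB coefficients that are symbols of order zero in $\xi$, each such differentiation preserves the size of the kernel. One therefore loses exactly the half-derivative gain, which is precisely the $L^{2,s}_\rho \to L^{2,s}_\rho$ bound claimed.

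The main obstacle I foresee is the transition zone $\xi \asymp \eta \asymp 1$, where neither the convergent low-frequency expansion in (ii) nor the asymptotic WKB expansion in (iii) is sharp. I expect to handle this by using the Wronskian identity $W(\theta,\phi)=1$ from (ii) together with the symbol-type bounds on $a(\xi)$ in (\ref{rhorep}) to patch the two asymptotic regimes and obtain a uniform kernel estimate in this intermediate range.
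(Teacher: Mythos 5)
Your overall strategy---representing $\KK$ as an integral operator in the spectral variable, decomposing $(\xi,\eta)$-space into low/high frequency blocks, feeding in the eigenfunction asymptotics from Lemma~\ref{spectrallemma}(ii)--(iii), and reducing to weighted kernel estimates---is exactly the architecture of the cited proof (Theorem~6.1 and Proposition~6.2 of Krieger--Schlag--Tataru, which is all the paper itself supplies). But the sketch glosses over the one genuinely hard point. The off-diagonal part of the kernel of $\KK$ has the form $K_0(\xi,\eta)=\frac{\rho(\eta)}{\xi-\eta}\,F(\xi,\eta)$ for a symbol $F$, so there is a non-integrable $|\xi-\eta|^{-1}$ singularity along the diagonal, in addition to a diagonal multiplication term proportional to $\delta(\xi-\eta)$ that your sketch never mentions. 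Schur's test, which is the only tool you invoke at the closing step, simply does not apply to a kernel with a Hilbert-transform-type singularity; one has to separate off a weighted principal-value piece near the diagonal and control it by Calder\'on--Zygmund / weighted Hilbert-transform estimates, with the half-derivative gain coming from the precise size and cancellation of the symbol $F$ rather than from pointwise kernel decay. Your line ``on the diagonal the zero-order symbol bounds on $\sigma$ produce the claimed smoothing'' is asserting exactly what has to be proved, not supplying a step.

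The commutator paragraph is also internally inconsistent. You claim that applying $\xi\partial_\xi+\eta\partial_\eta$ to $K(\xi,\eta)$ ``preserves the size of the kernel,'' and in the next sentence that this makes one ``lose exactly the half-derivative gain.'' These cannot both be right: if the commutator kernel were comparable in size to $K$, the commutator would inherit the same $L^{2,s}_\rho\to L^{2,s+\frac12}_\rho$ bound as $\KK$, which is strictly stronger than the $L^{2,s}_\rho\to L^{2,s}_\rho$ bound in the statement; if instead half a derivative is genuinely lost, you owe the reader a mechanism that produces the loss. In the actual argument that mechanism comes from the explicit Green's-function/Wronskian computation of $K_0$ via $\phi$, $\psi^{\pm}$ and from the behaviour of $\xi\rho'(\xi)/\rho(\xi)$, none of which appears in your sketch. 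In short: you have the right blueprint and the right sources of asymptotics, but the computation of the kernel and the singular-integral step are the substance of the proof, and the proposal leaves both as gaps.
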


\begin{proof}
This is the content of proposition 6.2 in \cite{KST}. See also theorem 6.1 there.
\end{proof}

\begin{lemma}\label{lemma10.1}
Define the map

\[u(R)\mapsto Tu(R,\theta)=e^{i\theta}R^{-\frac{1}{2}}u(R),\]

where the right hand side is interpreted as a function in $\RR^2$ expressed in polar coordinates $(R,\theta).$ Then $T$ is an isometry

\[T:L^2(\RR^+)\rightarrow L^2(\RR^2),\]

and for any $s\geq0$ we have

\[\|u\|_{H^{s/2}_\rho}(\RR^+)\asymp \|Tu\|_{H^s(\RR^2)}\]

in the sense that if one side is finite then the other is finite and they have comparable sizes.
\end{lemma}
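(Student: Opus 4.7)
The plan is a three-step argument: first derive an algebraic intertwining identity for $T$ and verify isometry; then transfer the intertwining to Sobolev norms via functional calculus; finally absorb the bounded short-range correction that distinguishes $\LL$ from its free part.

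\emph{Step 1 (intertwining).} Write $\LL_f := -\partial_R^2 + \frac{3}{4R^2}$ for the principal part of $\LL$, so that $\LL = \LL_f + V$ with $V(R) = -8/(1+R^2)^2$ a bounded multiplication operator. Computing $\|Tu\|_{L^2(\RR^2)}^2$ in polar coordinates gives
\[\int_0^{2\pi}\!\!\int_0^\infty R^{-1}|u(R)|^2\,R\,dR\,d\theta = 2\pi\,\|u\|_{L^2(\RR^+)}^2,\]
so $T$ is isometric up to the harmless constant $\sqrt{2\pi}$. Using $\Delta = \partial_r^2 + r^{-1}\partial_r + r^{-2}\partial_\theta^2$ together with $\partial_\theta^2(e^{i\theta}) = -e^{i\theta}$, a short direct computation yields the key intertwining identity
\[-\Delta(Tu) = e^{i\theta}R^{-1/2}\Big(-\partial_R^2 + \frac{3}{4R^2}\Big)u = T(\LL_f\,u).\]

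\emph{Step 2 (spectral transfer).} Both $\LL_f$ on $L^2(\RR^+)$ and $-\Delta$ on $L^2(\RR^2)$ are non-negative self-adjoint, and $T$ maps $L^2(\RR^+)$ isometrically onto the closed $-\Delta$-invariant subspace of $L^2(\RR^2)$ cut out by the angular factor $e^{i\theta}$. Iterating Step 1 gives $T\LL_f^k u = (-\Delta)^k Tu$ for all integers $k\geq 0$, and by the spectral theorem this extends to $T\,g(\LL_f) = g(-\Delta)\,T$ for every Borel function $g$. Taking $g(\lambda) = (1+\lambda)^{s/2}$ and using isometry yields
\[\|Tu\|_{H^s(\RR^2)}^2 = \|(1-\Delta)^{s/2}Tu\|_{L^2(\RR^2)}^2 \asymp \|(1+\LL_f)^{s/2}u\|_{L^2(\RR^+)}^2.\]

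\emph{Step 3 (perturbation).} It remains to show $\|(1+\LL_f)^{s/2}u\|_{L^2}\asymp \|(1+\LL)^{s/2}u\|_{L^2}$, since Lemma \ref{spectrallemma} identifies the right-hand side (up to constants) with $\|u\|_{H^{s/2}_\rho}$. Both $\LL$ and $\LL_f$ are non-negative with spectrum $[0,\infty)$, and $V$ is bounded. For $s=0$ the equivalence is trivial, and for $s=2$ the triangle inequality together with $\|Vu\|_{L^2}\lesssim \|u\|_{L^2}\leq \|(1+\LL_f)u\|_{L^2}$ gives both directions. The general range $s>0$ then follows by interpolation of fractional powers of these self-adjoint operators, for example via the Heinz inequality applied to the equivalent quadratic forms $\langle (1+\LL)u,u\rangle \asymp \langle (1+\LL_f)u,u\rangle$.

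The hardest part will be Step 3: upgrading the crude operator-level bound on $V$ to an equivalence of \emph{fractional} powers $(1+\LL)^{s/2}\asymp (1+\LL_f)^{s/2}$ for non-integer $s$. Steps 1--2 amount to bookkeeping in polar coordinates plus a single application of the spectral theorem. An alternative route to Step 3 is a direct Fourier-side comparison using Lemma \ref{spectrallemma}: the generalized eigenfunctions of $\LL_f$ are classical Bessel functions, while those of $\LL$ are described explicitly in the lemma, and one checks that the two distorted Fourier transforms are uniformly comparable on $\xi>0$, so that the weighted $L^2(\rho)$ norms they generate are equivalent.
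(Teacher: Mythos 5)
Your overall strategy---conjugating by $T$ to identify $\LL_f=-\partial_R^2+\tfrac{3}{4R^2}$ with the angular-momentum-one piece of $-\Delta$ on $\RR^2$, and then comparing $(1+\LL)^{s/2}$ with $(1+\LL_f)^{s/2}$---is sound, and it is essentially the standard route (the paper itself gives no proof, only the citation to Lemma 10.1 of \cite{KST}). Steps 1 and 2 are fine modulo routine care: the isometry holds only up to the factor $\sqrt{2\pi}$ unless the angular measure is normalized, and passing from $T\LL_f^k=(-\Delta)^kT$ on a core to $T\,g(\LL_f)=g(-\Delta)T$ requires knowing that $\LL_f$ is essentially self-adjoint on $C_c^\infty(0,\infty)$ (true, since the coupling $3/4$ is limit point at $R=0$), so that the restriction of $-\Delta$ to the $e^{i\theta}$ sector really coincides with $T\LL_fT^{-1}$ as a self-adjoint operator, domains included.

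The genuine gap is in Step 3, in two places. First, the range of $s$: the endpoint comparisons at $s=0$ and $s=2$ plus Heinz/interpolation only give $\|(1+\LL)^{s/2}u\|\asymp\|(1+\LL_f)^{s/2}u\|$ for $0\le s\le 2$ (and the form version of Heinz you quote only covers $0\le s\le1$; you want the operator version, $\|Au\|\le\|Bu\|$ on $D(A)=D(B)$ implies $\|A^\theta u\|\le\|B^\theta u\|$, applied with $A=1+\LL$, $B=9(1+\LL_f)$, using $\|u\|\le\|(1+\LL)u\|$ from $\LL\ge0$ for the reverse direction). The lemma asserts the equivalence for every $s\ge0$, and interpolation cannot reach $s>2$: there you must compare higher powers, e.g.\ show inductively that $V=-8/(1+R^2)^2$ maps $D(\LL_f^k)$ boundedly into itself, using that $V$ is smooth, even in $R$ and decaying, so that $[\LL_f,V]=-V''-2V'\partial_R$ is $\LL_f^{1/2}$-bounded; this is true but is an additional argument you have not supplied. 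Second, your proposed ``alternative route'' is incorrect as stated: the distorted Fourier transforms of $\LL$ and of the free Bessel operator are not uniformly comparable on $\xi>0$, since by Lemma \ref{spectrallemma}(iv) the density for $\LL$ behaves like $\xi^{-1}(\log\xi)^{-2}$ as $\xi\to0$ while the free (Hankel) density is comparable to $\xi$ there; the norm equivalence survives at low frequency only because the weight $\langle\xi\rangle^{s}$ is trivial there and both norms collapse to $L^2$ by unitarity. So keep the operator-theoretic route, supply the $s>2$ step, and drop or substantially qualify the Fourier-side aside.
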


\begin{proof}
See lemma 10.1 in \cite{KST}.
\end{proof}

\begin{lemma}\label{fundamentallemma}
Let $H$ be the fundamental solution of the operator (\ref{fundop}). Given $s\geq0$ let $N$ be large enough. Then there is a constant $C$ which depends on $s$ but not on $N$ so that

\begin{align*}
\|Hb\|_{L^{\infty,N-2}L^{2,s+\frac{1}{2}}_\rho}+\Big{\|}\big(\partial_\tau-2\frac{\lambda_\tau}{\lambda}\xi\partial_\xi)Hb\Big{\|}_{L^{\infty,N-1}L^{2,s}_\rho}\leq CN^{-1} \|b\|_{L^{\infty,N}L^{2,s}_\rho}.
\end{align*}

\end{lemma}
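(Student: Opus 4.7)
The plan is to construct $H$ explicitly by variation of constants along the characteristics of the transport operator $\mathcal{D} := \partial_\tau - 2(\lambda_\tau/\lambda)\xi\partial_\xi$, and then to read off the weighted estimates directly from the resulting kernel. I would first straighten $\mathcal{D}$ into $\partial_\tau$ via the substitution $\eta = \xi/\lambda(\tau)^2$, in which coordinates the operator (\ref{fundop}) takes the form $\partial_\tau^2 + \lambda(\tau)^2\eta$ plus a $\KK$-type correction from Lemma \ref{transference}. The associated homogeneous problem is solved by $\sin$ and $\cos$ of $\sqrt{\eta}\int^\sigma \lambda$, so variation of constants produces a Duhamel representation
\begin{equation*}
Hb(\tau,\eta) = \int_\tau^{\infty} K(\tau,\sigma;\eta)\,\tilde b(\sigma,\eta)\,d\sigma,
\end{equation*}
where $K$ contains a factor of $1/\sqrt{\eta}$ coming from the sine kernel, and $\tilde b$ denotes $b$ expressed in the straightened variables. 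Applying $\mathcal{D}$ inside the integral produces the analogous formula with the sine replaced by a bounded cosine kernel, so the $1/\sqrt{\eta}$ factor is lost.

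For each fixed $\tau$ I would estimate the integrand in $L^{2,s}_\rho$. Three ingredients are used. First, pulling back the $L^{2,s}_\rho$ norm from the $\xi$ variable at time $\sigma$ to the $\xi$ variable at time $\tau$ via the characteristic rescaling $\xi \mapsto \lambda(\sigma)^2\xi/\lambda(\tau)^2$ costs only a fixed polynomial power of $\sigma/\tau$; the symbol bounds on $\rho$ from Lemma \ref{spectrallemma}(iv) guarantee this. Second, the factor $1/\sqrt{\eta}$, equivalent to $\lambda(\tau)/\sqrt{\xi}$, realises the half-derivative gain $L^{2,s}_\rho\to L^{2,s+1/2}_\rho$, again by the weight structure of $\rho$. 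Third, the transference correction is absorbed by Neumann iteration based on Lemma \ref{transference}, which provides exactly the $\KK:L^{2,s}_\rho\to L^{2,s+1/2}_\rho$ bound and the commutator control with $\xi\partial_\xi$ needed to keep the iteration summable. Combining these with the decay hypothesis $\|\tilde b(\sigma)\|_{L^{2,s}_\rho}\leq \|b\|_{L^{\infty,N}L^{2,s}_\rho}\,\sigma^{-N}$ reduces both estimates to elementary bounds of the form
\begin{equation*}
\int_\tau^{\infty}\Bigl(\frac{\sigma}{\tau}\Bigr)^{C_0}\sigma^{-N}\,d\sigma \;\lesssim\; \frac{1}{N}\,\tau^{-N+C_0+1},
\end{equation*}
valid for $N$ sufficiently large compared to the fixed exponent $C_0$ coming from the kernel. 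Multiplying by the weights $\tau^{N-2}$ and $\tau^{N-1}$ respectively and taking the supremum in $\tau$ yields exactly the bound claimed in the lemma.

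The main obstacle is the careful bookkeeping of weights through the characteristic rescaling, particularly in the small-$\xi$ regime where Lemma \ref{spectrallemma}(iv) gives $\rho(\xi)\asymp 1/(\xi(\log\xi)^2)$ rather than the $\rho(\xi)\asymp\xi$ behavior valid for $\xi\gtrsim 1$. One needs to verify that the logarithmic correction neither destroys the half-derivative gain nor inflates the pull-back exponent $C_0$, and that the Neumann series in the transference correction converges with a bound uniform in $N$. Once these weight identifications are settled, the factor $N^{-1}$ arises automatically from the integration identity $\int_\tau^\infty\sigma^{-N+C_0}\,\frac{d\sigma}{\sigma} \asymp N^{-1}\tau^{-N+C_0}$ valid for $N\gg C_0$, which is precisely the mechanism that produces the stated smallness constant.
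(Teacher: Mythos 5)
The paper does not actually prove this lemma; it simply cites Proposition~7.1 and Corollary~7.2 of \cite{KST}, and your outline correctly reconstructs the mechanism used there: straightening the transport part, building $H$ by variation of constants with the sine kernel (which supplies the $\xi^{-1/2}$ factor and hence the $L^{2,s}_\rho\to L^{2,s+1/2}_\rho$ gain), running a Neumann iteration for the $\KK$-correction using Lemma~\ref{transference}, and extracting $N^{-1}$ from $\int_\tau^\infty\sigma^{-N+C_0}\,d\sigma/\sigma$. Two slips worth fixing: the variable that is constant along the characteristics of $\partial_\tau - 2(\lambda_\tau/\lambda)\xi\partial_\xi$ is $\eta=\xi\lambda(\tau)^2$, not $\xi/\lambda(\tau)^2$ (with your choice the sign of the transport term flips, and the straightened potential then comes out as $\eta/\lambda^2$ rather than $\lambda^2\eta$); and your intermediate estimate should read $\int_\tau^\infty(\sigma/\tau)^{C_0}\sigma^{-N}\,d\sigma\lesssim N^{-1}\tau^{-N+1}$ rather than $N^{-1}\tau^{-N+C_0+1}$ (the $\tau^{-C_0}$ from pulling the weight out cancels the $\tau^{C_0}$ from the antiderivative), although this does not change the conclusion once the remaining powers are tracked against the $\tau^{N-2}$ and $\tau^{N-1}$ weights. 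With those corrections your sketch is a faithful rendering of the cited argument.
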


\begin{proof}
See proposition 7.1 and corollary 7.2 in \cite{KST}.
\end{proof}

\begin{lemma}\label{nonlinearity}
Assume that $N$ is large enough and that $\frac{1}{4}<s<\frac{3}{4}+\frac{\nu}{2}.$ Then the map

\[x\mapsto \lambda^{-2}\FF\big(R^{\frac{1}{2}}N_{2k-1}(R^{-\frac{1}{2}}\FF^{-1}x)\big)\]

is locally Lipschitz from $L^{\infty,N-2}L^{2,s+\frac{1}{2}}_\rho$ to $L^{\infty,N}L^{2,s}_\rho.$

\end{lemma}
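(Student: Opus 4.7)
The plan is to transfer the estimate from the spectral-side weighted $L^2_\rho$ spaces to standard Sobolev spaces on $\RR^2$ via the isometry $T$ of Lemma \ref{lemma10.1}, where the polynomial nonlinearity $N_{2k-1}$ becomes accessible to the standard Moser/product estimates; the $\lambda^{-2}$ prefactor then absorbs the loss in the $\tau$-weights. Write $\mathcal{N}$ for the map whose Lipschitz property we wish to verify.

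At a fixed time $\tau$, set $v:=\FF^{-1}x$, so $u(R):=R^{-1/2}v(R)$ is the argument of $N_{2k-1}$, and write $V:=Tv=e^{i\theta}R^{-1/2}v=e^{i\theta}u$ for the corresponding function on $\RR^2$. Lemma \ref{spectrallemma}(v) identifies $\|x\|_{L^{2,\sigma}_\rho}$ with the spectral Sobolev norm $\|v\|_{H^\sigma_\rho}$, and Lemma \ref{lemma10.1} then gives
\[\|x\|_{L^{2,s+1/2}_\rho}\asymp\|V\|_{H^{2s+1}(\RR^2)},\qquad \|\FF g\|_{L^{2,s}_\rho}\asymp\|Tg\|_{H^{2s}(\RR^2)}.\]
Now $T(R^{1/2}N_{2k-1}(u))=e^{i\theta}N_{2k-1}(e^{-i\theta}V)$ is a sum of monomials $e^{im\theta}V^{j_1}\bar V^{j_2}h_{m,j_1,j_2}(R)$ with $j_1+j_2\geq 2k-1$, $|m|\leq 2k-1$, and smooth coefficients $h_{m,j_1,j_2}$ built from the background soliton $Q$ and its derivatives. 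The hypothesis $s>1/4$ implies $2s+1>1=d/2$ in dimension $d=2$, so $H^{2s+1}(\RR^2)$ embeds in $L^\infty$ and satisfies the Moser-type estimate $\|f^j\|_{H^{2s+1}}\lesssim\|f\|_{H^{2s+1}}^j$. Writing $N_{2k-1}(u_1)-N_{2k-1}(u_2)=(u_1-u_2)P(u_1,u_2)$ and using $H^{2s+1}\hookrightarrow H^{2s}$ then yields the pointwise-in-$\tau$ Lipschitz bound
\[\|\mathcal{N}(x_1)-\mathcal{N}(x_2)\|_{L^{2,s}_\rho}\lesssim\lambda^{-2}\bigl(\|x_1\|_{L^{2,s+1/2}_\rho}+\|x_2\|_{L^{2,s+1/2}_\rho}\bigr)^{2k-2}\|x_1-x_2\|_{L^{2,s+1/2}_\rho}.\]

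To pass to the $\tau$-weighted norms, suppose $\|x_i\|_{L^{\infty,N-2}L^{2,s+1/2}_\rho}\leq M$, so that $\|x_i(\tau)\|_{L^{2,s+1/2}_\rho}\leq M\tau^{-(N-2)}$, and likewise for $x_1-x_2$. Under the scaling $\lambda(\tau)\sim\tau^{-(1+\nu)}$, multiplying the pointwise bound by $\tau^N$ produces a residual factor of order $\tau^{2(1+\nu)+2-(2k-2)(N-2)}$, bounded (and in fact decaying) once $N$ is sufficiently large depending on $k$ and $\nu$; this is the meaning of ``$N$ large enough.'' The main obstacle is the fixed-$\tau$ Moser step: one must carefully absorb the angular phases $e^{\pm im\theta}$, which cost only $(1+|m|)^{2s+1}\leq (2k)^{2s+1}$ in passing between $H^s$ of a radial function and $H^s(\RR^2)$, and one must verify that the $Q$-dependent coefficients $h_{m,j_1,j_2}$ act as bounded multipliers $H^{2s+1}(\RR^2)\to H^{2s}(\RR^2)$ uniformly in the range $\tfrac14<s<\tfrac34+\tfrac\nu2$. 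The upper bound on $s$ arises from the interplay between the polynomial decay of $Q$ at spatial infinity and the low-frequency behavior of $\rho$, where $\rho(\xi)\sim(\xi(\log\xi)^2)^{-1}$.
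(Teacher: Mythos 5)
The paper itself offers no proof here: it simply cites Proposition~7.3 of \cite{KST}. Your sketch does capture the broad strategy used there --- transfer from the weighted spectral spaces $L^{2,\sigma}_\rho$ to ordinary Sobolev spaces $H^{2\sigma}(\RR^2)$ via the isometry $T$ of Lemma~\ref{lemma10.1}, exploit $2s+1>d/2$ for the algebra/Moser step, and then do the $\tau$-weight bookkeeping using the decay of $\lambda^{-2}$ and of the factors of $x$. That is the right skeleton, and the two-derivative loss $H^{2s+1}\to H^{2s}$ you write down is consistent with the half-derivative loss $L^{2,s+1/2}_\rho\to L^{2,s}_\rho$ in the statement.

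However, there are real gaps. First, the structure you assign to $N_{2k-1}$ is wrong: it is not a polynomial of degree $\geq 2k-1$ in the argument. It is an analytic (trigonometric) function of $\varepsilon = R^{-1/2}\FF^{-1}x$ that vanishes \emph{quadratically} at $\varepsilon=0$, with coefficients depending on the approximate profile $u_{2k-1}$ and carrying an overall $R^{-2}$ factor; the subscript $2k-1$ indexes the iterate, not a degree in $\varepsilon$. Consequently your ``$j_1+j_2\geq 2k-1$'' and the resulting $\tau$-exponent $(2k-2)(N-2)$ do not reflect the actual structure. The gain in $\tau$-weight comes from the quadratic vanishing combined with the decay of the profile-dependent coefficients, and tracking this correctly is a nontrivial part of the KST argument. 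Second, the assertion that the coefficient functions $h_{m,j_1,j_2}$ ``act as bounded multipliers $H^{2s+1}(\RR^2)\to H^{2s}(\RR^2)$'' is exactly what has to be proved, not assumed; the mechanism in \cite{KST} is a multiplier bound of the type stated in Lemma~\ref{lemma9.1} for symbols in $S(1,\mathcal{Q})$ together with careful control of the $R^{-2}$ singularity at $R=0$, and it is precisely this step that forces the upper bound $s<\frac34+\frac{\nu}{2}$. Your explanation of that restriction as ``the interplay between the polynomial decay of $Q$ at spatial infinity and the low-frequency behavior of $\rho$'' gestures at the right phenomenon but does not identify the actual constraint, which is the admissible range of $s$ in the multiplier lemma. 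In short: right framework, wrong model for the nonlinearity, and the central multiplier estimate is left unjustified.
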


\begin{proof}
Proposition 7.3 in \cite{KST}.
\end{proof}

\begin{lemma}\label{lemma9.1}
Let $q\in S(1,\mathcal{Q})$ and $|s|<\frac{\nu}{2}+\frac{3}{4}.$ Then

\[\|qf\|_{H^s_\rho}\lesssim\|f\|_{H^s_\rho}.\]

\end{lemma}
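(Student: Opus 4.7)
The plan is to transfer the multiplier estimate to a standard Sobolev multiplier estimate on $\RR^2$ via the isometry $T$ of Lemma~\ref{lemma10.1}, handle the positive-$s$ range directly, and extend to negative $s$ by duality.

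First I would treat the range $0\le s<\tfrac{\nu}{2}+\tfrac{3}{4}$. By Lemma~\ref{lemma10.1} applied with parameter $2s$, one has $\|qf\|_{H^s_\rho}\asymp\|T(qf)\|_{H^{2s}(\RR^2)}$ and likewise $\|f\|_{H^s_\rho}\asymp\|Tf\|_{H^{2s}(\RR^2)}$. Since $q=q(R)$ depends on the radial variable only, the definition $Tu(R,\theta)=e^{i\theta}R^{-1/2}u(R)$ yields $T(qf)=q(R)\,Tf$ pointwise. The desired bound therefore reduces to
\[\|q(R)\,g\|_{H^{2s}(\RR^2)}\lesssim\|g\|_{H^{2s}(\RR^2)}\]
for radial $g$, where $q$ is now viewed as a smooth radial multiplier on $\RR^2$. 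This is a standard Leibniz/Kato--Ponce estimate, provided the bounds encoded in the class $S(1,\mathcal{Q})$ furnish pointwise control on sufficiently many derivatives of $q$ and make the radial extension smooth across the origin.

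For the range $-\tfrac{\nu}{2}-\tfrac{3}{4}<s<0$ I would argue by duality. Since $\FF$ is unitary, the spaces $H^s_\rho$ and $H^{-s}_\rho$ pair as duals through the natural inner product, and the adjoint of multiplication by $q$ is multiplication by $\bar q$, which still lies in $S(1,\mathcal{Q})$; hence the bound at the positive exponent $|s|$ established in the previous step transfers to the corresponding negative exponent.

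The main obstacle is the dictionary step inside the positive-$s$ argument: one must verify that the abstract bounds packaged in $S(1,\mathcal{Q})$ yield genuine $L^\infty$ bounds on all derivatives of the radial extension of $q$ to $\RR^2$ up to the order $\lceil 2s\rceil$ required by the Sobolev multiplier estimate, with the endpoint $\tfrac{\nu}{2}+\tfrac{3}{4}$ matching exactly the smoothness provided by $\mathcal{Q}$ (and the scaling encoded in the spectral measure $\rho$ recalled in Lemma~\ref{spectrallemma}(iv)). Once this translation is carried out, Lemma~\ref{lemma10.1} and the duality step close the proof.
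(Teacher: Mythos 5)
The transference step through the isometry $T$ of Lemma~\ref{lemma10.1} (together with the observation $T(qf)=q\,Tf$ for radial $q$) and the duality argument for negative $s$ are both sound, and transference is in fact how the cited result in \cite{KST} reduces matters to $\RR^2$. The genuine gap is in how you propose to close the $\RR^2$ multiplier bound. You describe it as a ``standard Leibniz/Kato--Ponce estimate, provided $S(1,\mathcal{Q})$ furnishes pointwise control on sufficiently many derivatives of $q$,'' and list the verification of $L^\infty$ bounds on derivatives up to order $\lceil 2s\rceil$ as the remaining bookkeeping. But that verification cannot succeed: elements of $\mathcal{Q}$ are precisely \emph{not} $C^\infty_b$. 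They carry conormal singularities at the light cone $a=1$ (i.e.\ $R=t\lambda$), built from fractional powers of $(1-a)$ with $\nu$-dependent exponents (compare the $(1-a)^{\beta+\frac12}$ structure visible already in Lemma~\ref{lemma 3.9}), so higher $R$-derivatives of $q$ blow up there. The telltale sign is the hypothesis $|s|<\frac{\nu}{2}+\frac{3}{4}$: a Kato--Ponce argument resting on classical pointwise derivative bounds would yield a range independent of $\nu$ (indeed all $s$ if $q$ were genuinely smooth), so the $\nu$-dependent threshold cannot come from that route. What the lemma actually requires is a fractional multiplier estimate across the conormal singularity --- essentially that a localized $(1-a)_+^{\beta}$ is an $H^{\sigma}(\RR^2)$ multiplier only for $|\sigma|<\beta+\frac12$ --- and it is this threshold, pushed through the $s\mapsto 2s$ dictionary of Lemma~\ref{lemma10.1}, that produces the exponent $\frac{\nu}{2}+\frac{3}{4}$. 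That analysis is the entire content of the proof and is missing from the proposal.

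Two smaller remarks. First, $Tf=e^{i\theta}R^{-1/2}f$ is not radial, so the $\RR^2$ multiplier bound must be proved for general $g\in H^{2s}(\RR^2)$, not ``for radial $g$''; this is harmless but the phrasing is a slip. Second, smoothness of the radial extension at the origin is not the delicate point --- $\mathcal{Q}$-type symbols are analytic in $R^2$ near $R=0$ --- so the emphasis there is misplaced; the obstruction lives at $a=1$, not at $R=0$.
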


\begin{proof}
This is lemma 9.1 in \cite{KST}. Note that the algebras in our paper are slightly different from those in \cite{KST}. In particular there is an extra dependency on the variables $b_2,\cdots,b_5.$ This extra dependency is irrelevant in the proof of the lemma and can be ignored just as the $b_1$ variable is ignored in the proof of the lemma in \cite{KST}.
\end{proof}

\end{document}